\numberwithin{equation}{section}
\newtheorem{theorem}{Theorem}[section]
\newtheorem{proposition}[theorem]{Proposition}
\newtheorem{lemma}[theorem]{Lemma}
\newtheorem {corollary}[theorem]{Corollary}
\newtheorem{definition}[theorem]{Definition}
\newtheorem{example}[theorem]{Example}
\newtheorem{remark}[theorem]{Remark}
\newcommand{\ZZ}{{\mathbb Z}} 
\newcommand{\e}{{\varepsilon}}  
\def\reals{\mathbb {R}}
\def\ints{\mathbb {Z}}
\def\m{{m}}
\def\n{{n}}
\newcommand{\brac}[2]{{\langle #1\mid#2\rangle}}
\def\length{\ell}
\newcommand{\core}[1]{{{\mathcal C}_{#1}}}
\newcommand{\shi}[3]{{{\mathcal S}_{#2,#3}^{#1}}}
\def\shinm{\shi{}{n}{m}}
\newcommand{\region}[1]{{R}_{#1}}
\def\tR{\tilde{\region{}}}
\def\R1{\region{1}}
\newcommand{\A}{\mathcal{A}}
\newcommand{\Afund}{\A_0}
\newcommand{\domAlcoves}{\mathfrak{A}}
\newcommand{\wA}{w^{-1}\Afund}
\newcommand{\kwa}{k_{w, \a}}
\newcommand{\kwaij}[2]{k_{w, \a_{#1,#2}}}
\newcommand{\FVMap}{\Phi}
\newcommand{\RiMap}{\Psi}
\def\tp{\tilde{p}}
\newcommand{\FVMapArg}[1]{\FVMap(#1)}
\newcommand{\RiMapArg}[1]{\RiMap(#1)}
\newcommand{\mapFour}[3]{\mu^#1_{#2,#3}} 
\def\mapFourBn1{\mapFour{B}{n}{1}}
\newcommand{\roots}{\Delta}
\newcommand{\posRoots}{\roots^{+}}
\newcommand{\negRoots}{\roots^{-}}
\newcommand{\simpleRoots}{\Pi}
\def\a{\alpha}
\def\aij{\alpha_{ij}}
\def\ai{\alpha_{i}}
\def\Sn{\mathfrak{S}_n}
\def\affS{{\widehat{\mathfrak{S}}}_n}
\newcommand{\affSn}[1]{{\widehat{\mathfrak{S}}}_{#1}}
\newcommand{\coset}{{\affSn{n} / \Sn }}
\newcommand{\alij}[2]{\a_{#1, #2}}
\newcommand{\ei}{{\e_i}}  
\newcommand{\Hak}[2]{{H}_{#1, #2}}
\newcommand{\HH}[3]{\Hak{\alpha_{#1#2}}{#3}}
\newcommand{\Hp}[2]{{{\Hak{#1}{#2}}^+}}
\newcommand{\Hn}[2]{{{\Hak{#1}{#2}}^-}}
\newcommand{\Halpha}[3]{{H}_{\alpha_{#1#2},#3}}
\newcommand{\Hyp}[1]{{\mathcal H}_{#1}}
\newcommand{\Hthm}{\Hak{\theta}{m}}
\newcommand{\sepSet}[3]{{\mathfrak h}^{#1}_{{#3}{#2}}}
\newcommand{\wP}[5]{F^{{#1}}_{{#3}{#2}}(#4,#5)}
\newcommand{\wPthm}{F^{n}_{\theta,m}(p,q)}
\newcommand{\rStat}[1]{\text{r}(#1)}
\newcommand{\cStat}[1]{\text{c}(#1)}
\newcommand{\polyInt}[2]{[#1]_{#2}}
\newcommand{\trunc}[3]{\left({#1}\right)_{\leq {#2}^{#3}}}
\newcommand{\colRem}[2]{\phi_{#1,#2}}
\newcommand{\ceil}[2]{\left\lceil {\frac{#1}{#2} } \right\rceil}
\newcommand{\floor}[1]{\left\lfloor {#1}  \right\rfloor}
\definecolor{Green}{rgb}{0.1,0,0.55}
\newcommand{\omitt}[1]{}
\newcommand{\fpsacOmitt}[1]{#1}
\newcommand{\fpsacInclude}[1]{}
\def\mvB{{\color{Red}$\blacktriangleright$}}
\def\mvEn{{\color{Red}$\blacktriangleleft$}}
\def\mvNew#1{\mvB #1\mvEn}%
\newcommand{\sfOmitt}[1]{}
\DeclareMathOperator{\Inv}{Inv}
\title[Counting Shi regions]{Counting Shi regions with a fixed separating
  wall}
\newcounter{r}
\newcounter{c}
\newcommand{\youngDiagram}[2]{
        \setcounter{r}{0}
        \foreach \row in #1 {
                \setcounter{c}{0}
                \foreach \b in \row {
                        \node at (\value{c}*#2 + .5*#2, \value{r}*#2+.5*#2) {\b};
                        \addtocounter{c}{1}

                }
                \draw[step = #2] (0,\value{r}*#2) grid (\value{c}*#2,\value{r}*#2+#2);
                \addtocounter{r}{-1}
        }
}
\author{Susanna Fishel}\thanks{Susanna Fishel was partially supported by Simons Foundation Grant 209806.} \address{School of Mathematical and
  Statistical Sciences, Arizona State University, Tempe, AZ 85287,
  USA}
\author{Eleni Tzanaki} \address{Department of Applied Mathematics,
  University of Crete, 71409 Heraklion, Crete, Greece}
\author{Monica Vazirani}\thanks{Monica Vazirani was partially supported by NSA grant H982300910076.}\address{Department of Mathematics, UC Davis,
  CA 95616, USA}
\keywords{Shi arrangement, partitions}
\begin{document}
\maketitle
\date{\today}

\begin{abstract}
Athanasiadis introduced separating walls for a region in the extended
Shi arrangement and used them to generalize the Narayana numbers. In
this paper, we fix a hyperplane in the extended Shi arrangement for
type $A$ and calculate the number of dominant regions which have the
fixed hyperplane as a separating wall; that is, regions where the
hyperplane supports a facet of the region and separates the region
from the origin.
\end{abstract}

\section{Introduction}
\label{sec:intro}
A hyperplane arrangement dissects its ambient vector space into
regions. The regions have walls--hyperplanes which support facets of
the region-- and the walls may or may not separate the region from the
origin. The regions in the extended Shi arrangement are enumerated by
well-known sequences: all regions by the extended parking function
numbers, the dominant regions by the extended Catalan numbers,
dominant regions with a given number of certain separating walls by
the Narayana numbers. In this paper we study the extended Shi
arrangement by fixing a hyperplane in it and calculating the number of
regions for which that hyperplane is a separating wall. For example,
suppose we are considering the $m$th extended Shi arrangement in
dimension $n-1$, with highest root $\theta$. Let $\Hthm$ be the $m$th
translate of the hyperplane through the origin with $\theta$ as
normal. Then we show there are $m^{n-2}$ regions which abut $\Hthm$
and are separated from the origin by it.

At the heart of this paper is a well-known bijection from certain
integer partitions to dominant alcoves (and regions). One particularly
nice aspect of our work is that we are able to use the bijection to
enumerate regions. We characterize the partitions associated to the
regions in question by certain interesting features and easily count
those partitions, whereas it is not clear how to count the regions
directly.

We give two very different descriptions of this bijection, one
combinatorial and one geometric.  The first description of the
bijection comes from group theory, from studying the Scopes
equivalence on blocks of the symmetric group. The second description
is the standard one for combinatorics and used when studying the
affine symmetric group. As a reference, it is good to have both forms
of this oft-used map in one place.  We can then prove several results
in two ways, using the different descriptions. Although
Theorem~\ref{thm:baseCase} is essentially the same as
Theorem~\ref{thm:altBaseCase}, the proofs are very different and
Propositions \ref{prop-gamma} and {\ref{prop-Gamma} are of independent
  interest.

We rely on work from several sources.  \cite{Shi1986} introduced what
is now called the Shi arrangement while studying the affine Weyl group
of type $A$, and \cite{S1998} extended it. We also use his study of
alcoves in \cite{Shi}. \cite{Richards}, on decomposition numbers for
Hecke algebras, has been very useful. The Catalan numbers have been
extended and generalized; see \cite{A2005a} for the
history. Fuss-Catalan numbers is another name for the extended Catalan
numbers. The Catalan numbers can be written as a sum of Narayana
numbers. \cite{A2005a} generalized the Narayana numbers. He showed
they enumerated several types of objects; one of them was the number
of dominant Shi regions with a fixed number of separating walls. This
led us to investigate separating walls. All of our work is for type
$A$, although Shi arrangements, Catalan numbers, and Narayana numbers
exist for other types.

In Section~\ref{sec:prelim}, we introduce notation, define the Shi
arrangement, certain partitions, and the bijection between them which
we use to count regions. In Section~\ref{sec:firstSepWall}, we
characterize the partitions assigned to the regions which have $\Hthm$
as separating wall. In order to enumerate the regions which have other
separating walls, we must use a generating function, which we
introduce in Section~\ref{sec:genfunc}. The generating function recods
the number of hyperplanes of a certain type in the arrangement which
separte the region from the origin. It turns out that in the base
case, where the hyperplane is $\Hthm$ these numbers can be easily read
from the region's associated $n$-core and we obtain
Corollary~\ref{cor:boundaryRegionsGF}:
$$\sum_{\substack{\region{}:\Hthm\textrm{ is a}\\\textrm{separating
      wall for }\region{}}}p^{\cStat{\region{}}}q^{\rStat{\region{}}} =
  p^mq^m[p^{m-1}+p^{m-2}q+\cdots+q^{m-1}]^{n-2}, $$ where $\cStat{}$
  and $\rStat{}$ are defined in Section~\ref{sec:genfunc}.  Finally,
  in Section~\ref{sec:sephyp}, we give a recursion for the generating
  functions from Section~\ref{sec:genfunc}, which enables us to count
  the regions which have other separating walls $\Hak{\a}{m}$.

\section{Preliminaries}
\label{sec:prelim}
Here we introduce notation and review some constructions.

\subsection{Root system notation}
\label{subsec:roots}
Let $\{ \e_1, \ldots, \e_\n \}$ be the standard basis of $\reals^n$
and $\brac{\,}{\,}$ be the bilinear form for which this is an
orthonormal basis.  Let $\a_i = \e_i - \e_{i+1}$. Then $\simpleRoots =
\{\a_1, \ldots, \a_{\n-1} \}$ is a basis of \[V = \{ (a_1, \ldots,
a_\n ) \in \reals^n \mid \sum_{i=1}^n a_i =0 \}.\] For $i\leq j$, we
write $\aij$ for $\alpha_i+\ldots+\alpha_j$. In this notation, we have
that $\alpha_{i}=\alpha_{ii}$, the highest root $\theta$ is
$\a_{1,n-1}$, and $\aij=\e_i-\e_{j+1}$.

The elements of $\roots = \{\e_i - \e_j \mid i \neq j\}$ are called
roots and we write a root $\alpha$ is positive, written $\alpha > 0$, if
$\alpha \in \posRoots = \{\e_i - \e_j \mid i < j\}$. We let $\negRoots
= - \posRoots$ and write $\alpha < 0$ if $\alpha \in \negRoots$. Then
$\simpleRoots$ is the set of simple roots.  As usual, we let $Q =
\bigoplus_{i=1}^{n-1} \ZZ \alpha_i$ be identified with the root
lattice of type $A_{\n-1}$ and $Q^+= \bigoplus_{i=1}^{n-1} \ZZ_{\ge 0}
\alpha_i$.

\subsection{Extended Shi arrangements}
\label{subsec:shi}

A {\it hyperplane arrangement} is a set of hyperplanes, possibly
affine, in $V$.  We are interested in certain sets of
hyperplanes of the following form. For each $\a\in\posRoots$, we
define the reflecting hyperplane
$$\Hak{\alpha}{0}  = \{ v \in V \mid \brac{v }{\alpha} = 0 \}$$
and  its $k$th translate, for $k\in\ints$,
$$\Hak{\alpha}{k}  = \{ v \in V \mid \brac{v }{\alpha} = k \}.$$

Note $\Hak{-\alpha}{-k} =\Hak{\alpha}{k} $ so we usually take $k \in
\ints_{\ge 0}$. Then the extended Shi arrangement, here called the
{\it $\m$-Shi arrangement}, is the collection of
hyperplanes \[\Hyp{\m} = \{ \Hak{\alpha}{k} \mid \alpha \in \posRoots,
-m < k \le m \}.\]

This arrangement is defined for crystallographic root systems of all
finite types.

{\it Regions\/} of the $\m$-Shi arrangement are the connected
 components of the hyperplane arrangement complement $V \setminus
 \bigcup_{H \in \Hyp{\m}} H$.

We denote the closed half-spaces $\{ v \in V \mid \brac{v}{\alpha} \ge
k \}$ and $\{ v \in V \mid\brac{v}{\alpha} \le k \}$ by
$\Hp{\alpha}{k}$ and $\Hn{\alpha}{k}$ respectively. The {\it dominant}
or {\it fundamental} chamber of $V$ is $\bigcap_{i=1}^{n-1}
\Hp{\alpha_i}{0}$. This paper primarily concerns regions and alcoves
in the dominant chamber.

A {\it dominant region} of the $\m$-Shi arrangement is a region that
is contained in the dominant chamber. We denote the collection of
dominant regions in the $\m$-Shi arrangement $\shinm$.

Each connected component of $$V \setminus \bigcup_{\stackrel{\alpha
    \in \posRoots}{k \in \ZZ}} \Hak{\alpha}{k}$$ is called an {\it
  alcove} and the {\it fundamental alcove} is $\Afund$, the interior
of $ \Hn{\theta}{1} \cap \bigcap_{i=1}^{n-1} \Hp{\alpha_i}{0}$.  A {\it
  dominant alcove} is one contained in the dominant chamber. Denote
the set of dominant alcoves by $\domAlcoves_n$.

A {\it wall} of a region is a hyperplane in $\Hyp{\m}$ which supports
a facet of that region or alcove. Two open regions are {\it separated}
by a hyperplane $H$ if they lie in different closed half-spaces
relative to $H$.  Please see \cite{A2005a} or \cite{hum90} for
details.  We study dominant regions with a fixed separating wall. A
{\it separating wall} for a region $R$ is a wall of $R$ which
separates $R$ from $\Afund$.

\subsection{The affine symmetric group}
\label{sec:affAlcoves}
\begin{definition}
The affine symmetric group, denoted $\affS$, is defined as
\begin{align*}
  \affS = \langle s_1, \ldots, s_{\n-1}, s_0 \mid
  s_i^2 = 1, \quad  & s_i s_j = s_j s_i \text{ if } i \not\equiv j \pm 1 \mod n, \\
  & 
  s_i s_j s_i = s_j s_i s_j \text{ if } i \equiv j \pm 1 \mod n
  \rangle
\end{align*}
for $n > 2$, but ${\affSn{2}} = \langle s_1, s_0  \mid s_i^2 = 1 \rangle$.
\end{definition}

The affine symmetric group contains the symmetric group $\Sn$ as a
subgroup.  $\Sn$ is the subgroup generated by the $s_i$, $0<i<n$.  We
identify $\Sn$ with the set of permutations of $\{1, \ldots, \n\}$ by identifying
$s_i$ with the simple transposition $(i,i+1)$.

The affine symmetric group $\affS$ acts freely and transitively on the
set of alcoves.  We thus identify each alcove $\A$ with the unique $w
\in \affS$ such that $\A = w^{-1} \Afund$.  Each simple generator $s_i$,
$i>0$, acts by reflection with respect to the simple root
$\alpha_i$. In other words, it acts by reflection over the hyperplane
$\Hak{\alpha_i}{0}$.  The element $s_0$ acts as reflection with
respect to the affine hyperplane $\Hak{\theta}{1}$.

More specifically, the action on $V$ is given by
\begin{gather*}
s_i (a_1, \ldots, a_i, a_{i+1}, \ldots, a_\n) = (a_1, \ldots, a_{i+1},
a_i,\ldots, a_\n) \quad \text{ for $i \neq 0$, and} \\ s_0 (a_1,
\ldots, a_\n) = (a_\n +1, a_2 , \ldots, a_{\n-1}, a_1 - 1).
\end{gather*}
Note $\Sn$ preserves $\brac{\;}{\;}$, but $\affS$ does not.

\subsection{Shi coordinates and Shi tableaux.}
\label{sec;shiCoords}
\label{sec;shiTabs}
Every alcove $\A$ can be written as $w^{-1}\Afund$ for a unique
$w\in\affS$ and additionally, for each $\a\in\posRoots$, there is a
unique integer $k_{\a}$ such that $k_{\a}<\brac{\a}{x}<k_{\a}+1$ for
all $x\in\A$. Shi characterized the integers $k_{\a}$ which can arise
in this way and the next lemma gives the conditions for type $A$.
\begin{lemma}[\cite{Shi}] \label{lem:alcoveCoords}Let
  $\{k_{\aij}\}_{1\leq i\leq j\leq n-1}$ be a set of ${n\choose 2}$
  integers. There exists a $w\in\affS$ such that
$$k_{\aij}<\brac{\aij}{x}<k_{\aij}+1$$ for all $x\in w^{-1}\Afund$ if and
only if $$k_{\a_{it}}+k_{\a_{t+1,j}}\leq k_{\aij}\leq k_{\a_{it}}+k_{\a_{t+1,j}}+1,$$ for
all $t$ such that $i\leq t<j$.
\end{lemma}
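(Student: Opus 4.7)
\emph{Necessity} is a direct bilinearity argument: for every $x$ in the alcove, $\brac{\aij}{x} = \brac{\alpha_{it}}{x} + \brac{\alpha_{t+1,j}}{x}$, so adding the two-sided strict inequalities for the summands yields
$$k_{\alpha_{it}} + k_{\alpha_{t+1,j}} < \brac{\aij}{x} < k_{\alpha_{it}} + k_{\alpha_{t+1,j}} + 2,$$
and intersecting with the unit interval $(k_{\aij}, k_{\aij}+1)$ forces the claimed inequality.

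For \emph{sufficiency}, since $\affS$ acts simply transitively on alcoves, I would produce a single point $x = (a_1, \ldots, a_n) \in V$ for which $k_{\aij} < a_i - a_{j+1} < k_{\aij} + 1$ holds for all admissible $(i,j)$; the alcove containing $x$ is then $w^{-1}\Afund$ for the desired $w$. Setting $\delta_t := a_t - a_{t+1} - k_{\alpha_t}$ and observing that the affine constraint $\sum a_i = 0$ determines $x$ from the $\delta_t$, the problem reduces to finding $\delta_t \in (0,1)$ with
$$E_{ij} < \sum_{t=i}^{j} \delta_t < E_{ij} + 1 \quad \text{for } i < j,$$
where $E_{ij} := k_{\aij} - \sum_{t=i}^{j} k_{\alpha_t}$. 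The hypothesis becomes $E_{it} + E_{t+1,j} \leq E_{ij} \leq E_{it} + E_{t+1,j} + 1$ with $E_{ii} = 0$, so each $E_{ij}$ lies in $\{0, 1, \ldots, j-i\}$.

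I would carry out the construction by induction on $n$. Having fixed a valid $(\delta_1, \ldots, \delta_{n-2})$ for the subsystem indexed by pairs with $j \leq n-2$, I would choose $\delta_{n-1}$ to satisfy the additional $n-1$ constraints of the form $E_{i,n-1} - S_i < \delta_{n-1} < E_{i,n-1} + 1 - S_i$, where $S_i := \sum_{t=i}^{n-2} \delta_t$. The main obstacle is verifying that, intersected with $(0,1)$, these $n-1$ open subintervals have a common point; the key step is that for any pair $i_1 < i_2$, applying the two-sided recursion with intermediate index $t = i_2 - 1$ yields exactly the inequality needed to compare the lower bound from $i_2$ with the upper bound from $i_1$, after invoking the inductive estimate $\sum_{t=i_1}^{i_2-1} \delta_t \in (E_{i_1, i_2-1}, E_{i_1, i_2-1} + 1)$. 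Once compatibility is established, any $\delta_{n-1}$ in the resulting nonempty open interval completes the induction.
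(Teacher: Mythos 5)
The paper does not prove this lemma at all: it is quoted verbatim from Shi's work on alcoves and used as a black box, so there is no internal proof to compare against. Judged on its own, your argument is correct and gives a self-contained elementary proof. The necessity direction is right: $\brac{\aij}{x}$ lies in both $(k_{\aij},k_{\aij}+1)$ and $(k_{\alpha_{it}}+k_{\alpha_{t+1,j}},\,k_{\alpha_{it}}+k_{\alpha_{t+1,j}}+2)$, and two such integer-endpoint intervals meet exactly when $k_{\alpha_{it}}+k_{\alpha_{t+1,j}}\leq k_{\aij}\leq k_{\alpha_{it}}+k_{\alpha_{t+1,j}}+1$. For sufficiency, the reduction to finding $\delta_t\in(0,1)$ with $E_{ij}<\sum_{t=i}^j\delta_t<E_{ij}+1$ is clean, and your induction works: for each pair $i_1<i_2$ the two halves of the hypothesis at the intermediate index $t=i_2-1$, combined with the inductive bound $\sum_{t=i_1}^{i_2-1}\delta_t\in(E_{i_1,i_2-1},E_{i_1,i_2-1}+1)$, give respectively $L_{i_2}<U_{i_1}$ and $L_{i_1}<U_{i_2}$ for the two unit-length windows constraining $\delta_{n-1}$ (you state only one of the two comparisons explicitly, but both follow as you indicate from "the two-sided recursion"). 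The one step you leave tacit is the passage from pairwise intersection to a common point; for finitely many open intervals this is the one-dimensional Helly argument (any point strictly between $\max_i L_i$ and $\min_i U_i$ works, and that gap is nonempty precisely because of the pairwise inequalities), and it would be worth one sentence. With that sentence added, and the standard observation that the alcove containing your constructed point cannot cross any $\Hak{\alpha}{k}$, so every $x$ in it satisfies the same strict inequalities, the proof is complete. This is a genuine value-add relative to the paper, which delegates the entire statement to the citation.
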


From now on, except in the discussion of
Proposition~\ref{prop:cAndr2}, we write $k_{ij}$ for $k_{\aij}$.
These $\{k_{ij}\}_{1\leq i\leq n-1}$ are the {\em Shi coordinates} of
the alcove. We arrange the coordinates for an alcove $\A$ in the
Young's diagram (see Section~\ref{sec:partitions}) of a staircase
partition $(n-1,n-2,\ldots,1)$ by putting $k_{ij}$ in the box in row
$i$, column $n-j$. See \cite{KOP} for a similar arrangement of sets
indexed by positive roots. For a dominant alcove, the entries
are nonnegative and non-increasing along rows and columns.

We can also assign coordinates to regions in the Shi arrangement. In
each region of the $\m$-Shi hyperplane arrangement, there is exactly
one ``representative,'' or $\m$-minimal, alcove closest to the
fundamental alcove $\Afund$. See \cite{Shi1987} for $m=1$ and
\cite{A2005a} for $m\geq 1$. Let $\A$ be an alcove with Shi
coordinates $\{k_{ij}\}_{1\leq i\leq n-1}$ and suppose it is the
$\m$-minimal alcove for the region $\region{}$. We define coordinates
$\{e_{ij}\}_{1\leq i\leq j\leq n-1}$ for $\region{}$ by
$e_{ij}=\min(k_{ij},\m)$.

Again, we arrange the coordinates for a region $\region{}$ in the Young's
diagram (see Section~\ref{sec:partitions}) of a staircase partition
$(n-1,n-2,\ldots,1)$ by putting $e_{ij}$ in the box in row $i$,
column $n-j$. For dominant regions, the entries are nonnegative and non-increasing
along rows and columns.

\begin{example}
For $n=5$, the coordinates are arranged\\
\begin{tikzpicture}[font=\footnotesize]
\begin{scope}[shift={(-5,0)}]
\youngDiagram{{{$k_{14}$, $k_{13}$, $k_{12}$, $k_{11}$},{$k_{24}$, $k_{23}$,$k_{22}$},{$k_{34}$,$k_{33}$},{$k_{44}$}}}{.5}
\end{scope}
\begin{scope}[shift={(0,0)}]
\youngDiagram{{{$e_{14}$, $e_{13}$, $e_{12}$,
    $e_{11}$}, {$e_{24}$, $e_{23}$,
    $e_{22}$},{$e_{34}$, $e_{33}$}, {$e_{44}$}}}{.5}
\end{scope}
\end{tikzpicture}
\end{example}

\begin{example}
The dominant chamber for the $2$-Shi arrangement for $n=3$ is illustrated
in Figure~\ref{fig:alcoveRegionExample} The yellow region has
coordinates $e_{12}=2$, $e_{11}=1$, and $e_{22}=2$. Its $2$-minimal
alcove has coordinates $k_{12}=3$, $k_{11}=1$, and $k_{22}=2$.
\begin{figure}[ht]
\begin{center}
\begin{tikzpicture}[scale=.8]

\begin{scope}

\tikzstyle{every node}=[font=\footnotesize]
[4,1,2]

\def\a{1.73205}

\filldraw[fill = yellow, fill opacity = 1, draw opacity = 0] 
(2,\a)--(4,3*\a)--(5,3*\a)--(3,\a)--cycle;

\path [draw = black, very thick, draw opacity = 1, shift = {(0,0)}]
(0,0) -- +(60:6) node[black,above] {$H_{\alpha_1,0}$};

\path [draw = black,very thick, draw opacity = 1] (0,0) -- +(6,0) node[black,right] {$H_{\alpha_2,0}$};

\path [draw = red,very thick,draw opacity = 1] (0,0) -- (0,0);

\path [draw = black, very thick, draw opacity = 1, shift = {(1,0)}]
(0,0) -- +(60:6) node[black,above] {$H_{\alpha_1,1}$};

\path [draw = black,very thick, draw opacity = 1] (0.5,0.866025) -- +(6,0);
\draw [shift = {(0.5,0.866025)}] (6.5,0) node {$H_{\alpha_2,1}$};

\path [draw = black,very thick,draw opacity = 1] (0.5,0.866025) --
(1,0) node[black,below] {$H_{\theta,1}$};

\path [draw = black, very thick, draw opacity = 1, shift = {(2,0)}]
(0,0) -- +(60:6) node[black,above] {$H_{\alpha_1,2}$};

\path [draw = black, very thick, draw opacity = 1] (1,1.73205) -- +(6,0);
\draw [shift = {(1,1.73205)}] (6.5,0) node {$H_{\alpha_2,2}$};

\path [draw = black,very thick,draw opacity = 1] (1,1.73205) -- (2,0) node[black,below] {$H_{\theta,2}$};

\path [draw = black,dashed,very thick,draw opacity = 1] (1.5,1.5*\a) -- (3,0) node[black,below] {$H_{\theta,3}$};

\path [draw = black,dashed,very thick,draw opacity = 1] (2,2*\a) -- (4,0) node[black,below] {$H_{\theta,4}$};



\end{scope}

\end{tikzpicture}\\
\caption{\small $\shi{}{3}{2}$ consists of 12 regions}
\label{fig:alcoveRegionExample}
\end{center}
\end{figure}
\end{example}
Denote the {\it Shi tableau} for the alcove $\A$ by $T_{\A}$ and for
the region $\region{}$ by $T_{\region{}}$.

Both \cite{Richards} and \cite{A2005a}
characterized the Shi tableaux for dominant $\m$-Shi regions.

\begin{lemma}
\label{lemma:shiTabReg}
Let $T=\{e_{ij}\}_{1\leq i\leq j\leq n-1}$ be a collection of
  integers such that $0\leq e_{ij}\leq m$. Then $T$ is the Shi tableau
  for a region $\region{}\in\shi{}{n}{m}$ if and only if
  \begin{equation}
\label{eqn:shiTabReg}
e_{ij}=\begin{cases}
e_{it}+e_{t+1,j}\text{ or }e_{it}+e_{t+1,j}+1&\text{if
}m-1\geq e_{it}+e_{t+1,j}\text{ for }t=i,\ldots,j-1\\
m&\text{otherwise}\\
\end{cases}
\end{equation}
\end{lemma}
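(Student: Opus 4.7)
My plan is to prove the ``if and only if'' in both directions, using Lemma~\ref{lem:alcoveCoords} (Shi's alcove criterion) as the principal tool together with the defining relation $e_{ij} = \min(k_{ij}, m)$, where $\{k_{ij}\}$ are the Shi coordinates of the $m$-minimal alcove of the region.

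For \emph{necessity}, let $R \in \shi{}{n}{m}$ have $m$-minimal alcove with Shi coordinates $\{k_{ij}\}$. Fix $i < j$ and $t \in \{i, \ldots, j-1\}$; Lemma~\ref{lem:alcoveCoords} gives $k_{it}+k_{t+1,j} \leq k_{ij} \leq k_{it}+k_{t+1,j}+1$. If $e_{it}+e_{t+1,j} \leq m-1$, then both summands lie in $[0, m-1]$, so $k_{it}=e_{it}$ and $k_{t+1,j}=e_{t+1,j}$; Shi's upper bound then forces $k_{ij} \leq m$, so $e_{ij}=k_{ij} \in \{e_{it}+e_{t+1,j},\, e_{it}+e_{t+1,j}+1\}$. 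If instead some $t$ satisfies $e_{it}+e_{t+1,j} \geq m$, then $k_{it}+k_{t+1,j} \geq m$ forces $k_{ij} \geq m$, and hence $e_{ij}=m$.

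For \emph{sufficiency}, given a tableau $T$ satisfying \eqref{eqn:shiTabReg}, I plan to construct integers $\{k_{ij}\}$ satisfying Lemma~\ref{lem:alcoveCoords} with $\min(k_{ij},m)=e_{ij}$. These realize some alcove $A$; since an $m$-Shi hyperplane $\Hak{\alpha}{k}$ with $1 \leq k \leq m$ separates $A$ from $\Afund$ precisely when it separates the region of $A$ from $\Afund$, the function $\alpha \mapsto \min(k_\alpha, m)$ is constant on each region, so $T$ is the Shi tableau of the region containing $A$. Define $k_{ij}$ by induction on $j-i$: set $k_{ii}=e_{ii}$; for $j>i$, set $k_{ij}=e_{ij}$ when $e_{ij}<m$, and $k_{ij} = \max\!\bigl(m,\, \max_{t}(k_{it}+k_{t+1,j})\bigr)$ when $e_{ij}=m$. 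Verifying $\min(k_{ij},m)=e_{ij}$ and the lower Shi bound $k_{it}+k_{t+1,j} \leq k_{ij}$ is routine.

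The main obstacle is the upper Shi bound $k_{ij} \leq k_{it}+k_{t+1,j}+1$ when $e_{ij}=m$. I would split this into the sub-case $k_{ij}=m$ (which by construction means $\max_t(k_{it}+k_{t+1,j}) \leq m$; then the ``first-case'' clause of \eqref{eqn:shiTabReg} applied for every $t$ forces $e_{it}+e_{t+1,j}=m-1$, hence $k_{it}+k_{t+1,j}=m-1$, so the bound holds with equality) and the sub-case $k_{ij}=\max_t(k_{it}+k_{t+1,j}) \geq m$, which reduces to the claim that $|k_{i,t_1}+k_{t_1+1,j}-k_{i,t_2}-k_{t_2+1,j}| \leq 1$ for all $t_1 < t_2$ in $\{i, \ldots, j-1\}$. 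All five coordinates involved lie on strictly shorter roots, so by induction already satisfy Shi's criterion; adding $k_{i,t_1}+k_{t_1+1,t_2} \leq k_{i,t_2}$ to $k_{t_1+1,j} \leq k_{t_1+1,t_2}+k_{t_2+1,j}+1$ yields $k_{i,t_1}+k_{t_1+1,j} \leq k_{i,t_2}+k_{t_2+1,j}+1$, and the symmetric pair gives the reverse. This closes the induction and produces the required alcove.
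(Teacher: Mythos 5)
Your proof is correct in substance but takes a genuinely different route from the paper's. The paper disposes of this lemma in a few lines by citing Athanasiadis's bijection between $m$-minimal alcoves of dominant regions and co-filtered chains of ideals $\posRoots = I_0 \supseteq \cdots \supseteq I_m$, observing that his closure conditions \eqref{eqn:cofilterI} and \eqref{eqn:cofilterJ} translate directly into \eqref{eqn:shiTabReg} once one sets $e_{uv}=k$ when $\a_{uv}\in I_k\setminus I_{k+1}$. You instead give a self-contained argument from Shi's alcove criterion (Lemma~\ref{lem:alcoveCoords}) and the definition $e_{ij}=\min(k_{ij},m)$: necessity is the direct translation you describe, and sufficiency is handled by explicitly constructing an alcove realizing the tableau. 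What your approach buys is independence from the machinery of \cite{A2005a}, an explicit witness alcove, and --- via your observation that $\min(k_\alpha,m)$ is constant on each region --- a clean reason why that witness need not be the $m$-minimal alcove. The key technical content, the ``within-1'' claim that $|k_{i,t_1}+k_{t_1+1,j}-k_{i,t_2}-k_{t_2+1,j}|\le 1$ for coordinates already known (by induction on root length) to satisfy Shi's criterion, is proved correctly.

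One small repair is needed in your first sub-case ($e_{ij}=m$ and $k_{ij}=m$). You assert that the first clause of \eqref{eqn:shiTabReg} applies there for every $t$ and forces $e_{it}+e_{t+1,j}=m-1$. That is not automatic: the tableau may be in the ``otherwise'' clause, with some $t_0$ having $e_{it_0}+e_{t_0+1,j}\ge m$ while $k_{it_0}+k_{t_0+1,j}$ still equals $m$ (for instance $e_{it_0}=m$, $e_{t_0+1,j}=0$, $k_{it_0}=m$; already visible for $n=3$, $m=1$ with $e_{11}=1$, $e_{22}=0$, $e_{12}=1$). So your stated reason does not cover all $t$ in that sub-case. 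The fix is already contained in your own write-up: the within-1 claim holds for all $t_1<t_2$ regardless of which clause of \eqref{eqn:shiTabReg} applies, so once one splitting has $k$-sum equal to $m$ (the ``otherwise'' situation), or all splittings have $e$-sum, hence $k$-sum, equal to $m-1$ (the first-clause situation), every splitting has $k$-sum at least $m-1$, and the upper Shi bound $k_{ij}\le k_{it}+k_{t+1,j}+1$ follows. With that adjustment the construction closes and the proof is complete.
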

\begin{proof}
  \cite{A2005a} defined co-filtered chains of ideals as decreasing
  chains of ideals in the root poset $$\posRoots=I_0\supseteq
  I_1\supseteq \ldots\supseteq I_m$$ in $\posRoots$ such that
\begin{equation}
\label{eqn:cofilterI}
(I_i+I_j)\cap\posRoots\subseteq I_{i+j},
\end{equation}
and
\begin{equation}
\label{eqn:cofilterJ}
(J_i+J_j)\cap\posRoots\subseteq J_{i+j},
\end{equation}
where $I_k=I_m$ for $k>m$ and $J_i=\posRoots\setminus I_i.$ He gave a
bijection between co-filtered chains of ideals and $m$-minimal alcoves
for $\region{}\in\shi{}{n}{m}$. Given such a chain, let $e_{uv}=k$ if
$\a_{uv}\in I_k$, $\a_{uv}\notin I_{k+1}$, and $k<m$ and let
$e_{uv}=m$ if $\a_{uv}\in I_m$. Then conditions (\ref{eqn:cofilterI})
and (\ref{eqn:cofilterJ}) translate into (\ref{eqn:shiTabReg}).
\end{proof}

Lemma 3.9 from \cite{A2005a} is crucial to our work here.  He
characterizes the co-filtered chains of ideals for which $\Hak{\a}{m}$
is a separating wall. We translate that into our set-up in
Lemma~\ref{lem:indecomp}, using entries from the Shi
Tableau.

\begin{lemma}[\cite{A2005a}]
\label{lem:indecomp}
A region $\region{}\in\shi{}{n}{m}$ has
$\HH{u}{v}{m}$ as a separating wall
if and only if $e_{uv}=m$ and for all $t$
such that $u\leq t<v$, $e_{ut}+e_{t+1,v}=m-1$.
\end{lemma}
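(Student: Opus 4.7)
The plan is to apply Athanasiadis's characterization of separating walls \cite[Lemma~3.9]{A2005a} to the co-filtered chain of ideals $\posRoots = I_0 \supseteq I_1 \supseteq \cdots \supseteq I_m$ associated to $\region{}$ via the bijection recalled in the proof of Lemma~\ref{lemma:shiTabReg}, and then translate the resulting condition through the dictionary $e_{uv}=k$ iff $\a_{uv}\in I_k\setminus I_{k+1}$ for $k<m$, together with $e_{uv}=m$ iff $\a_{uv}\in I_m$. In Athanasiadis's language, $\HH{u}{v}{m}$ is a separating wall of $\region{}$ exactly when $\a_{uv}\in I_m$ and $\a_{uv}$ is \emph{indecomposable} with respect to the chain: for every decomposition $\a_{uv}=\a_{ut}+\a_{t+1,v}$ with $u\leq t<v$, if $\a_{ut}\in I_i$ and $\a_{t+1,v}\in I_j$ then $i+j<m$, so that the containment (\ref{eqn:cofilterI}) does not by itself force $\a_{uv}\in I_m$.

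First I would translate the two conditions. The condition $\a_{uv}\in I_m$ is by definition $e_{uv}=m$. For the indecomposability condition, the deepest containments of $\a_{ut}$ and $\a_{t+1,v}$ occur at $i=e_{ut}$ and $j=e_{t+1,v}$ respectively, so indecomposability becomes $e_{ut}+e_{t+1,v}\leq m-1$ for every $t$ with $u\leq t<v$. This gives the ``$\leq$'' version of the target statement.

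Second I would upgrade $\leq m-1$ to $= m-1$ using the Shi-tableau recursion~(\ref{eqn:shiTabReg}). Given $e_{uv}=m$ and $e_{ut}+e_{t+1,v}\leq m-1$, the recursion falls into its first case and forces $e_{uv}\in\{e_{ut}+e_{t+1,v},\,e_{ut}+e_{t+1,v}+1\}$; since $e_{uv}=m$ while $e_{ut}+e_{t+1,v}\leq m-1$, this pins down $e_{ut}+e_{t+1,v}=m-1$ for each $t$, as claimed. The converse direction runs the same calculation backwards: from $e_{uv}=m$ and $e_{ut}+e_{t+1,v}=m-1$ for every $t$ we recover that $\a_{uv}\in I_m$ and that no decomposition satisfies $i+j\geq m$, and Athanasiadis's lemma then yields that $\HH{u}{v}{m}$ is a separating wall.

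The main obstacle, and essentially the only non-mechanical step, is to make sure that Athanasiadis's ``wall'' really is a facet-supporting hyperplane in the sense used here, so that indecomposability is genuinely what separates \emph{walls} that separate $\region{}$ from $\Afund$ from \emph{arbitrary} hyperplanes that merely separate $\region{}$ from $\Afund$ (the latter being captured by $e_{uv}=m$ alone). Once this identification is confirmed by comparing definitions in \cite{A2005a}, the lemma reduces to the dictionary translation above plus one invocation of Lemma~\ref{lemma:shiTabReg}.
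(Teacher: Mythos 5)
Your proposal is correct and follows exactly the route the paper takes: the paper gives no written proof of Lemma~\ref{lem:indecomp}, but states that it is Athanasiadis's Lemma~3.9 (the indecomposability criterion for co-filtered chains of ideals) translated through the dictionary $e_{uv}=k$ iff $\a_{uv}\in I_k\setminus I_{k+1}$ set up in the proof of Lemma~\ref{lemma:shiTabReg}. Your write-up simply makes that translation explicit, including the correct observation that the a priori inequality $e_{ut}+e_{t+1,v}\leq m-1$ is forced up to equality by the recursion (\ref{eqn:shiTabReg}) once $e_{uv}=m$.
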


\subsection{Partitions}
\label{sec:partitions}
A {\it partition} is a non-increasing sequence
$\lambda=(\lambda_1,\lambda_2,\ldots,\lambda_n)$ of nonnegative
integers, called the {\it parts} of $\lambda$. We identify a partition
$\lambda=(\lambda_1,\lambda_2,\ldots, \lambda_n)$ with its Young
diagram, that is the array of boxes with coordinates $\{ (i,j): 1 \leq
j \leq \lambda_i \mbox{ for all } \lambda_i \} .$ The {\it conjugate}
of $\lambda$ is the partition $\lambda'$ whose diagram is obtained by
reflecting $\lambda$'s diagram about the diagonal. The {\it length} of
a partition $\lambda$, $\length(\lambda)$, is the number of positive
parts of $\lambda$.

\subsubsection{Core partitions}
The {\it $(k,l)$-hook} of any partition $\lambda$ consists of the
$(k,l)$-box of $\lambda$, all the boxes to the right of it in row $k$
together with all the boxes below it and in column $l$. The {\it hook
length\/} $h_{kl}^\lambda$ of the box $(k,l)$ is the number of boxes
in the $(k,l)$-hook.  Let $n$ be a positive integer. An {\it
$n$-core\/} is a partition $\lambda$ such that $n \nmid
h_{(k,l)}^\lambda$ for all $(k,l) \in \lambda$.  We let $\core{n}$
denote the set of partitions which are $n$-cores.

\subsubsection{$\affS$ action on cores}
\label{subsec:affCores}
There is a well-known action of $\affS$ on $n$-cores which we will
briefly describe here; please see \cite{MisraMiwa}, \cite{Lascoux01},
\cite{LapointeMorse}, \cite{BJV}, or \cite{FV1}, for more details and
history.

The Young diagram of a partition $\lambda$ is made up of boxes. We
say the box in row $i$ and column $j$ has \textit{residue $r$} if
$j-i\equiv r\mod n$. A box not in the Young diagram of $\lambda$ is
called {\em addable} if we obtain a partition when we add it to
$\lambda$. In other words, the box $(i,j+1)$ is addable if
$\lambda_i=j$ and either $i=1$ or $\lambda_{i-1}>\lambda_i$. A box in
the Young diagram of $\lambda$ is called {\em removable} if we obtain
a partition when we remove it from $\lambda$. It is well-known (see
for example \cite{FV1} or \cite{LapointeMorse}) that the following
action of $s_i\in\affS$ on $n$-cores is well-defined.

\begin{definition}
$\affS$ action $n$-core partitions:
\begin{enumerate}
\item If $\lambda$ has an addable box with residue $r$, then
$s_r(\lambda)$ is the $n$-core partition created by adding all addable
boxes of residue $r$ to $\lambda$.

\item If $\lambda$ has an removable box with residue $r$, then $s_r(\lambda)$
is the $n$-core partition created by removing all removable boxes of residue $r$ from $\lambda$.

\item If $\lambda$ has neither removable nor addable boxes of residue $r$,
then $s_r(\lambda)$ is $\lambda$.
\end{enumerate}
\end{definition}

\subsection{Abacus diagrams}
\label{subsection;abacus-diagrams}
In Section~\ref{sec:firstSepWall}, we use a bijection, called
$\RiMap$, to describe certain regions. We will need abacus diagrams to
define $\RiMap$.  We associate to each partition
$\lambda$ its abacus diagram. When $\lambda$ is an $n$-core, its
abacus has a particularly nice form.

The {\it $\beta$-numbers} for a partition $\lambda = (\lambda_1,
\dots, \lambda_r)$ are the hook lengths from the boxes in its first
column: $$ \beta_k = h_{(k,1)}^{\lambda}.$$ Each partition is
determined by its $\beta$-numbers and $\beta_1>\beta_2>\cdots
>\beta_{\ell(\lambda)}>0$.

An \textit{$n$-abacus diagram}, or abacus diagram when $n$ is clear,
is a diagram with integer entries arranged in $\n$ columns labeled $0,
1, \dots, \n-1$. The columns are called \textit{runners}.  The
horizontal cross-sections or rows will be called \em levels \em and
runner $k$ contains the integer entry $q \n + r$ on level $q$ where
$-\infty < q < \infty$.  We draw the abacus so that each runner is
vertical, oriented with $-\infty$ at the top and $\infty$ at the
bottom, and we always put runner $0$ in the leftmost position,
increasing to runner $\n-1$ in the rightmost position.  Entries in the
abacus diagram may be circled; such circled elements are called
\textit{beads}. The {\it level} of a bead labeled by $qn+r$ is $q$ and
its runner is $r$. Entries which are not circled will be called
\textit{gaps}.  Two abacus diagrams are equivalent if one can be
obtained by adding a constant to each entry of the other.

See Example \ref{ex-abacus} below.

Given a partition $\lambda$ its abacus is any abacus diagram
equivalent to the one with beads at entries $\beta_k =
h_{(k,1)}^{\lambda}$ and all entries $j \in \ZZ_{< 0}$.

Given the original $n$-abacus for the partition $\lambda$ with beads at
$\{\beta_k\}_{1\leq k\leq\ell(\lambda)}$, let $b_i$ be one more than
the largest level number of a bead on runner $i$; that is, the level
of the first gap. Then $(b_0,\ldots,b_{n-1})$ is the {\it vector of
level numbers} for $\lambda$.

The \textit{balance number} of an abacus is the sum over all runners
of the largest level of a bead in that runner.  An abacus
is \textit{balanced} if its balance number is zero.  There
is a unique $n$-abacus which represents a given $\n$-core $\lambda$ for
each balance number.  In particular, there is a unique $n$-abacus for
$\lambda$ with balance number $0$.

\begin{remark}
  It is well-known that $\lambda$ is an $\n$-core if and only if all
  its $n$-abacus diagrams are \textit{flush}, that is to say whenever
  there is a bead at entry $j$ there is also a bead at $j -
  \n$. Additionally, if $(b_0,\ldots,b_{n-1})$ is the vector of level
  numbers for $\lambda$, then $b_0=0$,
  $\sum_{i=0}^{n-1}b_i=\ell(\lambda)$, and since there are no gaps,
  $(b_0\ldots,b_{n-1})$ describes $\lambda$ completely.
\end{remark}

\begin{example}
\label{ex-abacus}
Both abacus diagrams in Figure~\ref{fig;abacus} represent the 4-core
$\lambda =(5,2,1,1,1)$.  The levels are indicated to the left of the
abacus and below each runner is the largest level number of a bead in
that runner. The boxes of the Young diagram of $\lambda$ have been
filled with their hooklengths. The diagram on the left is
balanced. The diagram on the right is the original diagram, where the
beads are placed at the $\beta$-numbers and negative integers. The
vector of level numbers for $\lambda$ is $(0,3,1,1)$.
\begin{figure}[ht]
\begin{center}
\begin{tikzpicture}[fill opacity=1,every to/.style={draw,dotted}]

[fill opacity=.1]
\tikzstyle{every node}=[font=\footnotesize]
\path[draw=black, very thick,dashed,draw opacity = 1] (0,0) to (0,4);
\path[shift = {(.75,0)},draw=black, very thick,dashed,draw opacity = 1] (0,0) to (0,4);
\path[shift = {(1.5,0)},draw=black, very thick,dashed,draw opacity = 1] (0,0) to (0,4);
\path[shift = {(2.25,0)},,draw=black, very thick,dashed,draw opacity = 1] (0,0) to (0,4);

\node[color=red] at (0, -.5) {2};
\node[color=red] at (.75, -.5) {0};
\node[color=red] at (1.5, -.5) {0};
\node[color=red] at (2.25, -.5) {-2};
\fill[fill=white] (-.75,1) circle (.25) node {2};
\fill[fill=white] (-.75,1.5) circle (.25) node {1};
\fill[fill=white] (-.75,2) circle (.25) node {0};
\fill[fill=white] (-.75,2.5) circle (.25) node {-1};
\fill[fill=white] (-.75,3) circle (.25) node {-2};

\shade[shading=ball, ball color = Gainsboro] (0,1) circle (.25) node {8};
\shade[shading=ball, ball color = Gainsboro] (0,1.5) circle (.25) node
{4};
\shade[shading=ball, ball color = Gainsboro] (0,2) circle (.25) node {0};
\shade[shading=ball, ball color = Gainsboro] (0,2.5) circle (.25) node {-4};
\shade[shading=ball, ball color = Gainsboro] (0,3) circle (.25) node {-8};
\fill[fill=white,fill opacity=.1] (.75,1) circle (.25) node {9};
\fill[fill=white] (.75,1.5) circle (.25) node
{5};
\shade[shading=ball, ball color = Gainsboro] (.75,2) circle (.25) node {1};
\shade[shading=ball, ball color = Gainsboro] (.75,2.5) circle (.25) node {-3};
\shade[shading=ball, ball color = Gainsboro] (.75,3) circle (.25) node {-7};

\fill[fill=white] (1.5,1) circle (.25) node {10};
\fill[fill=white] (1.5,1.5) circle (.25) node {6};
\shade[shading=ball, ball color = Gainsboro] (1.5,2) circle (.25) node {2};
\shade[shading=ball, ball color = Gainsboro] (1.5,2.5) circle (.25) node {-2};
\shade[shading=ball, ball color = Gainsboro] (1.5,3) circle (.25) node {-6};

\fill[fill=white] (2.25,1) circle (.25) node {11};
\fill[fill=white] (2.25,1.5) circle (.25) node
{7};
\fill[fill=white] (2.25,2) circle (.25) node {3};
\fill[fill=white] (2.25,2.5) circle (.25) node {-1};
\shade[shading=ball, ball color = Gainsboro] (2.25,3) circle (.25) node {-5};

\begin{scope}[shift={(4,0)}]
\node[color=red] at (0, -.5) {-1};
\node[color=red] at (.75, -.5) {2};
\node[color=red] at (1.5, -.5) {0};
\node[color=red] at (2.25, -.5) {0};
\fill[fill=white] (-.75,1) circle (.25) node {2};
\fill[fill=white] (-.75,1.5) circle (.25) node {1};
\fill[fill=white] (-.75,2) circle (.25) node {0};
\fill[fill=white] (-.75,2.5) circle (.25) node {-1};
\fill[fill=white] (-.75,3) circle (.25) node {-2};

\path [draw=black, very thick,dashed,draw opacity = 1](0,0) to (0,4);
\path[shift = {(.75,0)},draw=black, very thick,dashed,draw opacity = 1] (0,0) to (0,4);
\path[shift = {(1.5,0)},draw=black, very thick,dashed,draw opacity = 1] (0,0) to (0,4);
\path[shift = {(2.25,0)},draw=black, very thick,dashed,draw opacity = 1] (0,0) to (0,4);
\shade[shading=ball, ball color = Gainsboro] (.75,1) circle (.25) node {9};
\shade[shading=ball, ball color = Gainsboro] (.75,1.5) circle (.25) node{5};
\shade[shading=ball, ball color = Gainsboro] (.75,2) circle (.25) node {1};
\shade[shading=ball, ball color = Gainsboro] (.75,2.5) circle (.25) node {-3};
\shade[shading=ball, ball color = Gainsboro] (.75,3) circle (.25) node {-7};
\fill[fill=white] (1.5,1) circle (.25) node {10};
\fill[fill=white] (1.5,1.5) circle (.25) node{6};
\shade[shading=ball, ball color = Gainsboro] (1.5,2) circle (.25) node {2};
\shade[shading=ball, ball color = Gainsboro] (1.5,2.5) circle (.25) node {-2};
\shade[shading=ball, ball color = Gainsboro] (1.5,3) circle (.25) node {-6};

\fill[fill=white] (2.25,1) circle (.25) node {11};
\fill[fill=white] (2.25,1.5) circle (.25) node {7};
\shade[shading=ball, ball color = Gainsboro] (2.25,2) circle (.25) node {3};
\shade[shading=ball, ball color = Gainsboro] (2.25,2.5) circle (.25) node {-1};
\shade[shading=ball, ball color = Gainsboro] (2.25,3) circle (.25) node {-5};

\fill[fill=white] (0,1) circle (.25) node {8};
\fill[fill=white](0,1.5) circle (.25) node {4};
\fill[fill=white] (0,2) circle (.25) node {0};
 \shade[shading=ball, ball color = Gainsboro] (0,2.5) circle (.25) node {-4};
\shade[shading=ball, ball color = Gainsboro] (0,3) circle (.25) node {-8};
\end{scope}
\begin{scope}[shift={(3,-.75)}]
\node[shift = {(-1,-1)}] {$\lambda =$};
\def\boxpath{-- +(-0.5,0) -- +(-0.5,-0.5) -- +(0,-0.5) -- cycle};
\draw[step =0.5,shift={(0,0)}, thick](0,-0.5) \boxpath node[anchor= south east ]{9};
\draw[step =0.5,shift={(0,0)}, thick](0.5,-0.5) \boxpath node[anchor= south east ]{5};
\draw[step =0.5,shift={(0,0)}, thick](1,-0.5) \boxpath node[anchor= south east ]{3};
\draw[step =0.5,shift={(0,0)}, thick](1.5,-0.5) \boxpath node[anchor= south east ]{2};
\draw[step =0.5,shift={(0,0)}, thick](2,-0.5) \boxpath node[anchor= south east ]{1};
\draw[step =0.5,shift={(0,0)}, thick](0,-1) \boxpath node[anchor= south east ]{5};
\draw[step =0.5,shift={(0,0)}, thick](0.5,-1) \boxpath node[anchor= south east ]{1};
\draw[step =0.5,shift={(0,0)}, thick](0,-1.5) \boxpath node[anchor= south east ]{3};
\draw[step =0.5,shift={(0,0)}, thick](0,-2) \boxpath node[anchor= south east ]{2};
\draw[step =0.5,shift={(0,0)}, thick](0,-2.5) \boxpath node[anchor= south east ]{1};

\end{scope}
\end{tikzpicture}

\caption{\small     The abacus represents the $4$-core $\lambda$.  }
  \label{fig;abacus}
\end{center}
\end{figure}
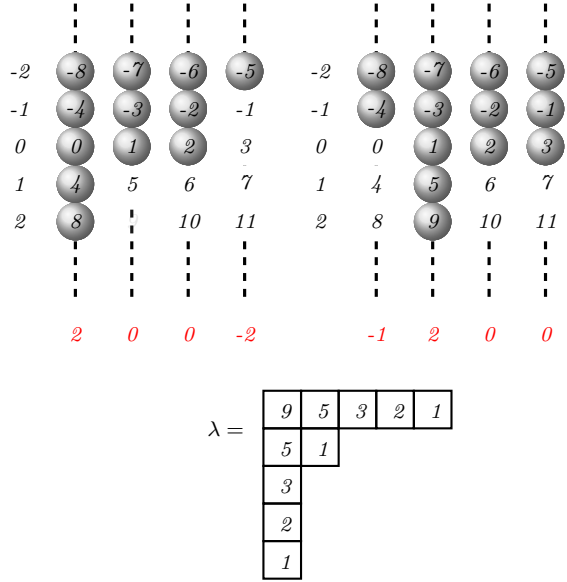

\end{example}

\subsection{Bijections}
\label{sec:FVBij}
We describe here two bijections, $\RiMap$ and $\FVMap$, from the set
of $n$-cores to dominant alcoves. We neither use nor prove the fact
that $\RiMap=\FVMap$.

\subsubsection{Combinatorial description}
\label{subsec:FVBij1}
$\RiMap$ is a slightly modified version of the bijection
given in \cite{Richards}. Given an $n$-core $\lambda$, let
$(b_0=0,b_1,\ldots,b_{n-1})$ be the level numbers for its abacus. Now
let $\tilde{p}_i=b_{i-1}n+i-1$, which is the entry of the first gap on
runner $i$, for $i$ from $1$ to $n$, and then let
$p_1=0<p_2<\cdots<p_n$ be the $\{\tilde{p}_i\}$ written in ascending
order. Finally we define $\RiMap(\lambda)$ to be the alcove whose Shi
coordinates are given by
$$k_{ij}=\lfloor\frac{p_{j+1}-p_i}{n}\rfloor$$ for $1\leq i\leq j\leq
n-1$.

\begin{example}
\label{ex:fvmap}
We continue Example~\ref{ex-abacus}. We have $n=4$,
$\lambda=(5,2,1,1,1)$, and $(b_0,b_1,b_2,b_3)=(0,3,1,1)$. Then
$\tp_1=0$, $\tp_2=13$, $\tp_3=6$, and $\tp_4=7$ and $p_1=0$, $p_2=6$,
$p_3=7$, and $p_4=13$. Thus $\RiMap(\lambda)$ is the alcove with the
following Shi tableau.\\

\begin{tikzpicture}[font=\footnotesize]
\youngDiagram{{{$k_{13}=3$,$k_{12}=1$,$k_{11}=1$},{$k_{23}=1$,$k_{22}=0$},$k_{33}=1$}}{1.1}
\end{tikzpicture}

\end{example}

\begin{proposition}
\label{prop:FVBij}
The map $\RiMap$ from $n$-cores to dominant alcoves is a bijection.
\end{proposition}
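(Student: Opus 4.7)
The plan is to factor $\RiMap$ through well-known bijections and then construct an explicit inverse for the remaining step.  The $n$-cores are in bijection with vectors of level numbers $(b_0, \ldots, b_{n-1})$ satisfying $b_0 = 0$ and $b_i \ge 0$, and these are in turn in bijection with sorted tuples $0 = p_1 < p_2 < \cdots < p_n$ whose residues mod $n$ give a permutation of $\{0, 1, \ldots, n-1\}$.  Let $\mathcal{T}$ denote this set of tuples.  It thus suffices to show that the map $\mathcal{T} \to \domAlcoves_n$ defined by $k_{ij} = \lfloor (p_{j+1}-p_i)/n \rfloor$ is a bijection.

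First I verify well-definedness.  Since the $p_i$ are strictly increasing, each $k_{ij} \ge 0$.  The elementary inequalities $\lfloor x \rfloor + \lfloor y \rfloor \le \lfloor x+y \rfloor \le \lfloor x \rfloor + \lfloor y \rfloor + 1$, applied to $x = (p_{t+1}-p_i)/n$ and $y = (p_{j+1}-p_{t+1})/n$, yield exactly the alcove conditions of Lemma~\ref{lem:alcoveCoords}, so the $k_{ij}$ are the Shi coordinates of a dominant alcove.

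To exhibit an inverse, write $p_i = nq_i + r_i$ with $0 \le r_i < n$; then $q_1 = r_1 = 0$, $\{r_1, \ldots, r_n\} = \{0, 1, \ldots, n-1\}$, and a direct computation gives
\[
k_{ij} \;=\; q_{j+1} - q_i - \delta_{ij}, \qquad \text{where } \delta_{ij} = 1 \text{ if } r_{j+1} < r_i \text{ and } 0 \text{ otherwise.}
\]
Taking $i = 1$ yields $k_{1,j} = q_{j+1}$, so the $q_i$ are read off the first row of the Shi tableau.  Conversely, starting from a dominant alcove with Shi coordinates $\{k_{ij}\}$, I set $q_1 := 0$, $q_{j+1} := k_{1,j}$, and $\delta_{ij} := q_{j+1} - q_i - k_{ij}$; Lemma~\ref{lem:alcoveCoords} applied to $(1, i-1, j)$ forces $\delta_{ij} \in \{0, 1\}$ when $i \ge 2$, while $\delta_{1,j} = 0$ automatically.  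Declaring, for $i < j$, that $j \prec i$ when $\delta_{i,j-1} = 1$ and $i \prec j$ otherwise defines a binary relation $\prec$ on $\{1, \ldots, n\}$.

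The main obstacle is verifying that $\prec$ is a strict total order, i.e.\ transitive.  This reduces to a case analysis over the six possible relative orderings of a triple of indices $u, v, w$ in $\{1, \ldots, n\}$; in each case, a single application of Lemma~\ref{lem:alcoveCoords} to an appropriate triple $(i, t, j)$, together with the constraint $0 \le a + b - c \le 1$ on the three $\delta$-values, forces the remaining $\delta$ to take the required value.  Once transitivity is established, the $r_i$ are uniquely determined as the permutation of $\{0, \ldots, n-1\}$ compatible with $\prec$ and $r_1 = 0$, and setting $p_i := nq_i + r_i$ produces a preimage in $\mathcal{T}$.  This simultaneously yields injectivity and surjectivity of $\RiMap$.
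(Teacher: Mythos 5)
Your proposal is correct, and its skeleton matches the paper's proof: the same floor-function inequalities establish that the $k_{ij}$ satisfy Lemma~\ref{lem:alcoveCoords}, and the inverse is built in both cases by writing $p_i = nq_i + r_i$, reading $q_{j+1} = k_{1,j}$ off the first row, and then reconstructing the residue permutation $r$. Where you genuinely diverge is in that last reconstruction. The paper computes the left inversion table $\Inv(r_{j+1}) = |\{i < j+1 : k_{1,j} = k_{1,i-1} + k_{ij} + 1\}|$ and invokes the standard fact that a permutation is determined by its inversion table; it does not explicitly check that the permutation so obtained satisfies the individual indicator conditions $[r_{j+1} < r_i] = q_{j+1} - q_i - k_{ij}$ for every pair, which is what is actually needed for the constructed $p$-sequence to map back to the given alcove. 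Your tournament argument addresses exactly this point: the constraint from Lemma~\ref{lem:alcoveCoords} applied to $(i,t,j)$ translates into $\delta_{it} + \delta_{t+1,j} - 1 \le \delta_{ij} \le \delta_{it} + \delta_{t+1,j}$, which rules out both $3$-cycles in the relation $\prec$, and a $3$-cycle-free tournament is a total order. So your route buys a consistency verification that the paper leaves implicit, at the cost of a case analysis you only sketch (though the sketch is completable as indicated). Your preliminary reduction --- factoring through level-number vectors and sorted tuples $0 = p_1 < \cdots < p_n$ with distinct residues --- is also sound and makes the ``bijection onto an intermediate combinatorial set'' step cleaner than in the paper, where it is absorbed into the final sentence about sorting the $p_i$ by residue.
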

\begin{proof}
  We first show that we indeed produce an alcove by the
  process above. By Lemma~\ref{lem:alcoveCoords}, it is enough to show
  that $k_{it}+k_{t+1,j}\leq k_{ij}\leq k_{it}+k_{t+1,j}+1$ for all $t$
  such that $1\leq t<j$.

$k_{ij}=\lfloor\frac{p_{j+1}-p_i}{n}\rfloor$ implies that
\begin{equation}
\label{eqn:altK}
k_{ij}=\frac{p_{j+1}-p_i}{n}-B_{ij}\textrm{ where }0\leq B_{ij}<1.
\end{equation}
 Let $t$ be such that $0\leq t<j$. Using (\ref{eqn:altK}), we have
$$k_{it}+k_{t+1,j}=\frac{p_{t+1}-p_i}{n}-\frac{p_{j+1}-p_{t+1}}{n}-B_{ij}-B_{j+1,t+1}.$$
Now let $A=B_{ij}+B_{j+1,t+1}$, so that $0\leq A<2$. We have
$$k_{it}+k_{t+1,j}+A=\frac{p_{j+1}-p_i}{n}.$$ Thus $$\lfloor
k_{it}+k_{t+1,j}+A\rfloor = \lfloor
\frac{p_{j+1}-p_i}{n}\rfloor=k_{ij}$$ or, since $k_{it}$ and
$k_{t+1,j}$ are integers,
\begin{equation}
\label{eqn:ksum}
k_{it}+k_{t+1,j}+\lfloor A\rfloor = k_{ij}.
\end{equation} Combining (\ref{eqn:ksum}) with $\lfloor
A\rfloor$ is equal to $0$ or $1$ shows that the conditions in
Lemma~\ref{lem:alcoveCoords} are satisfied and we have the Shi
coordinates of an alcove. Since each $k_{ij}\geq 0$, it is an alcove
in the dominant chamber.

Now we reverse the process described above to show that $\RiMap$ is a
bijection. Let $\{k_{ij}\}_{1\leq i\leq j\leq n-1}$ be the Shi
coordinates of a dominant alcove.  Write $p_i=nq_i+r_i$ for the
intermediate values $\{p_i\}$, which we first calculate. Then
$p_1=q_1=r_1=0$ and $q_i=k_{1,i-1}$. We must now determine
$r_2,\ldots,r_n$, a permutation of $1,\ldots,n-1$. However, since
\begin{equation}
\label{eqn:kij}
k_{ij}=\begin{cases}q_{j+1}-q_{i}&\text{if }r_{j+1}>r_{i}\\
q_{j+1}-q_{i}-1&\text{if }r_{j+1}<r_{i}.
\end{cases},
\end{equation}

 we can determine the inversion table for this permutation, using
$k_{ij}$ for $2\leq i\leq j\leq n-1$ and $q_1,\ldots,q_n$. Indeed,
\begin{align}
\label{eqn:inv}
\Inv(r_{j+1})&=|\{r_i\mid 1\leq i<j+1\textrm{ and }r_i>r_{j+1}\}|\nonumber\\
&=|\{(k_{1j},k_{1,{i-1}},k_{ij})\mid k_{1j}=k_{1,i-1}+k_{ij}+1\}|.
\end{align}

Therefore, we can compute $r_2,\ldots,r_n$ and therefore
 $p_1,p_2,\ldots,p_n$. We can now sort the $\{p_i\}$ according to
 their residue mod $n$, giving us $\tilde{p}_1,\ldots,\tilde{p}_n$;
 from this, $(b_0,\ldots,b_{n-1})$. Note that $(b_0,\ldots,b_{n-1})$
 is a permutation of $q_1,\ldots,q_n$.
\end{proof}

\begin{example}
We continue Examples~\ref{ex-abacus} and \ref{ex:fvmap} here. Suppose
we are given that $n=4$ and the alcove coordinates
$k_{13}=3$,$k_{12}=1$,$k_{11}=1$,$k_{23}=1$,$k_{22}=0$, and
$k_{33}=1$. That is,

\begin{tikzpicture}
\node at (-.6,0) {$T_{\region{}}=$};
\youngDiagram{{{$k_{13}$, $k_{12}$,
    $k_{11}$}, { $k_{23}$,
    $k_{22}$},{$k_{33}$}}}{.6}
\begin{scope}[shift={(3,0)}]
\node at (-.6,0) {$=$};
\youngDiagram{{{$3$, $1$,
    $1$}, {$1$,
    $0$},{$1$}}}{.6}
\end{scope}

\end{tikzpicture}

We demonstrate $\RiMap^{-1}$ and calculate
$(b_0,b_1,b_2,b_3)$ and thereby the $4$-core $\lambda$.  We have
$q_1=0$, $q_2=1$, $q_3=1$, and $q_4=3$, and $r_1=0$, from $k_{13}$,
$k_{12}$, and $k_{11}$. We must determine $r_2$, $r_3$, $r_4$, a
permutation of $1,2,3$.
\smallskip

Using (\ref{eqn:inv}), we know $\Inv(r_4)=2$, since $k_{13}=k_{11}+k_{23}+1$ and $k_{13}=k_{12}+k_{33}+1$.
\smallskip

$\Inv(r_3)=0$, since $k_{12}\neq k_{11}+k_{22}+1$.
\smallskip

$\Inv(r_2)=0$, always.
\smallskip

Therefore we have $r_3=3$, $r_2=2$, and $r_4=1$, which means
$b_1=q_4=3$, $b_2=q_2=1$, and $b_3=q_3=1$.
\end{example}

\begin{remark}
  The column (or row) sums of the Shi tableau of an alcove give us a
  partition whose conjugate is $(n-1)$-bounded, as in the bijections
  of \cite{LapointeMorse} or \cite{BB1996}
\end{remark}

\subsubsection{Geometric description}
\label{subsec:FVBij2}
The bijection $\FVMap$ associates an $n$-core to an alcove through the
$\affS$ action described in Sections~\ref{subsec:affCores} and
\ref{sec:affAlcoves}. The map $\FVMap:w\emptyset\mapsto w^{-1}\Afund$
for $w\in\affS$ a minimal length coset for $\affS/\Sn$, is a
bijection. In \cite{FV1}, it is shown that the $m$-minimal alcoves of
Shi regions in $\shinm$ correspond, under $\FVMap$, to $n$-cores which
are also $(nm+1)$-cores.

\section{Separating wall $\Hthm$}
\label{sec:firstSepWall}
Separating walls were defined in Section~\ref{subsec:shi} as a wall of
a region which separates the region from $\Afund.$ Equivalently for
alcoves, $\Hak{\a}{k}$ is a separating wall for the alcove
$w^{-1}\Afund$ if there is a simple reflection $s_i$, where $0\leq
i<n$, such that $w^{-1} \Afund \subseteq \Hp{\alpha}{k}\text{ and
}(s_iw)^{-1} \Afund \subseteq \Hn{\alpha}{k}$.  We want to count the
regions which have $\Halpha{}{}{m}$ as a separating wall, for any
$\a\in\posRoots$. We do this by induction and the base case will be
$\a=\theta.$ Our main result in this section characterizes the regions
which have $\Hthm$ as a separating wall by describing the $n$-core
partitions associated to them under the bijections $\RiMap$ and
$\FVMap$ described in Section~\ref{sec:FVBij}.

\begin{theorem}
\label{thm:baseCase}
Let $\RiMap:\core{n}\to\domAlcoves_n$ be the bijection described in
Section~\ref{subsec:FVBij1}, let $\region{}\in\shinm$ have $m$-minimal
alcove $\A$, and let $\lambda$ be the $n$-core such that
$\RiMapArg{\lambda{}}=\A$.  Then $\Hthm$ is a separating wall for the
region $\region{}$ if and only if $h_{11}^{\lambda}=n(m-1)+1$.
\end{theorem}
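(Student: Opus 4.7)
My plan is to translate the separating-wall condition into a condition on the Shi coordinates $k_{ij}$ of the $m$-minimal alcove $\A$ via Lemma~\ref{lem:indecomp}, then use the formula $k_{ij}=\lfloor(p_{j+1}-p_i)/n\rfloor$ from the construction of $\RiMap$ to reduce everything to a single arithmetic condition on $p_n$, which I then identify with $h_{11}^\lambda+n$ using the abacus.

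Step 1. I would apply Lemma~\ref{lem:indecomp} with $(u,v)=(1,n-1)$, so that $\alpha_{uv}=\theta$: $\Hthm$ is a separating wall of $\region{}$ if and only if $e_{1,n-1}=m$ and $e_{1t}+e_{t+1,n-1}=m-1$ for all $1\le t<n-1$. Since each summand in the latter equation is nonnegative and their sum is $<m$, both summands are $<m$, so they agree with the alcove coordinates: $e_{1t}=k_{1t}$ and $e_{t+1,n-1}=k_{t+1,n-1}$. Combining this with Lemma~\ref{lem:alcoveCoords} (which constrains $k_{1,n-1}-k_{1t}-k_{t+1,n-1}\in\{0,1\}$) and $e_{1,n-1}=\min(k_{1,n-1},m)=m$, I pin down $k_{1,n-1}=m$ exactly. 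Thus the separating-wall condition is equivalent to the alcove-level statement $k_{1,n-1}=m$ together with $k_{1t}+k_{t+1,n-1}=m-1$ for $1\le t<n-1$.

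Step 2. I would substitute $k_{ij}=\lfloor(p_{j+1}-p_i)/n\rfloor$ and write $p_n=nm+u$ with $0\le u<n$ (a range forced by $k_{1,n-1}=m$). A short residue calculation shows that $\lfloor p_{t+1}/n\rfloor+\lfloor(p_n-p_{t+1})/n\rfloor$ equals $m-1$ precisely when $p_{t+1}\bmod n>u$, and equals $m$ otherwise. Since $\tp_i\equiv i-1\pmod n$, the residues of $p_1,\ldots,p_n$ are a permutation of $\{0,\ldots,n-1\}$ with $p_1\equiv 0$ and $p_n\equiv u$, so the residues of $p_2,\ldots,p_{n-1}$ are exactly $\{1,\ldots,n-1\}\setminus\{u\}$. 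Demanding that all of them exceed $u$ forces $u\in\{0,1\}$; but $u=0$ would require $\tp_1=nm$, contradicting $\tp_1=0$ when $m\ge 1$. Hence the equivalent condition collapses to $p_n=nm+1$.

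Step 3. Because $n$-core abacuses are flush, the largest bead on runner $i-1$ sits at position $\tp_i-n$, so
\begin{equation*}
h_{11}^\lambda \;=\; \beta_1 \;=\; \max_{1\le i\le n}(\tp_i-n) \;=\; p_n-n.
\end{equation*}
Therefore $p_n=nm+1$ is equivalent to $h_{11}^\lambda=n(m-1)+1$, giving both directions of the theorem. The main obstacle is the residue bookkeeping in Step 2 that isolates $u=1$ uniquely from the $n$ a priori possibilities for $p_n\bmod n$; the remaining steps are essentially unpacking of definitions together with Lemmas~\ref{lem:indecomp} and~\ref{lem:alcoveCoords}.
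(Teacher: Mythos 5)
Your proof is correct and takes essentially the same approach as the paper's proof of Theorem~\ref{thm:baseCase}: both use Lemma~\ref{lem:indecomp} to reduce the separating-wall condition to $k_{1,n-1}=m$ together with $k_{1t}+k_{t+1,n-1}=m-1$ for all $1\le t<n-1$, and then push this through the definition of $\RiMap$ to a condition on the abacus of $\lambda$. The only divergence is in bookkeeping: you isolate $p_n=nm+1$ by a direct residue computation and finish with the identity $h_{11}^{\lambda}=p_n-n$, whereas the paper extracts $r_n=1$ from the inversion formula (\ref{eqn:inv}) in the proof of Proposition~\ref{prop:FVBij} and characterizes the hook-length condition through the level numbers as $b_1=m$ and $b_i<m$ for $1<i\le n-1$.
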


\begin{proof}

Let $\vec b(\lambda)=(b_0,b_1,\ldots,b_{n-1})$ be the vector of level
numbers for the $n$-core $\lambda$, so $b_o=0$. We first note that
$h_{11}=\beta_1=n(m-1)+1$ if and only if $b_1=m$ and $b_i<m$ for $1<i\leq
n-1$.

\medskip

Now suppose that $\Hthm$ is a separating wall for the region
$\region{}$. Let $\{e_{ij}\}$ be the coordinates of $\region{}$ and
let $\{k_{ij}\}$ be the coordinates of $\A$. By
Lemma~\ref{lem:indecomp}, we know that $e_{1,n-1}=m$ and
$e_{1t}+e_{t+1,n-1}=m-1$, for all $t$ such that $1\leq
t<n-1$. Therefore for all $e_{ij}$ except $e_{1,n-1}$, we have
$e_{ij}\leq m - 1$, so that $e_{ij}=k_{ij}$. Since
$k_{1t}+k_{t+1,n-1}\leq k_{1,n-1}\leq k_{1t}+k_{t+1,n-1}+1$, we have
that $k_{1,n-1}\leq m$, so indeed the Shi coordinates of $\region{}$
are the same as the coordinates of $\A$.

Consider the proof of Proposition~\ref{prop:FVBij} where we describe
$\RiMap^{-1}$, but in this situation.  We see that $\{q_i\}_{1\leq
  i\leq n-1}$, a nonincreasing rearrangement of $(b_1\ldots,b_{n-1})$,
is made up of $m$ and $n-2$ nonnegative integers strictly less than
$m$. So we need only show that $b_1=m$, in view of our first remark of
the proof.  Combining (\ref{eqn:inv}) with the facts that if $\Hthm$
is a separating wall for a region then $e_{ij}=k_{ij}$ and, then by
Lemma~\ref{lem:indecomp}, $k_{1,n-1}=k_{1,i-1}+k_{i,n+1}+1$ for all
$i$ such that $2\leq i\leq n$, we have $\Inv(r_n)=n-1$. This implies
that $r_n=1$, so that $b_1=q_n=k_{1,n-1}=m$.

\sfOmitt{Set the intermediaries $p_i=q_in+r_i$ for $1\leq i\leq n$,
so that $p_1=r_1=q_1=0$ and $q_i=k_{1,i-1}$. Since $k_{1,n-1}=m$ and
$k_{1j}<m$ for $j<n-1$, we know $q_n=m$ and $q_j<m$ for $1\leq j\leq
n-1$. Thus $p_n=mn+r_n$. Thus to show that $b_1=m$, we must show that
$r_n=1$, meaning $p_n=\tilde{p}_1$.

Since $r_1,\ldots,r_n$ are pairwise distinct positive integers, we
know $r_n=1$ if and only if $r_n<r_i$ for $2\leq i\leq n-1$. By
specializing (\ref{eqn:kij}) to $j=n-1$, we have
\begin{equation}
\label{eqn:kin_1}
k_{i,n-1}=\begin{cases}q_{n}-q_{i}&\text{if }r_{n}>r_{i}\\
q_{n}-q_{i}-1&\text{if }r_{n}<r_{i}.
\end{cases}.
\end{equation}

By Lemma~\ref{lem:indecomp}, we know that
\begin{equation}
\label{eqn:k1n_1}
k_{1,i-1}+k_{i,n-1}=m-1
\end{equation}

Combining (\ref{eqn:kin_1}), (\ref{eqn:k1n_1}), and $k_{1,i-1}=q_i$,
we see that indeed $r_n<r_i$ for $2\leq i\leq n-1$.}

\medskip
Conversely, suppose that $h^{\lambda}_{11}=n(m-1)+1$, so that $b_1=m$
and $b_i\leq m-1$ for $1<i\leq n-1$. Then $\tilde{p}_2=nm+1$ and
$\tilde{p}_i=nb_{i-1}+i-1 \leq n(m-1)+i-1\leq
n(m-1)+n-1=nm-1$. Therefore, $p_1=0$ and $p_n=nm+1$ and $p_i\leq
nm-1$, so that $q_1=0$, $q_n=m$, $r_n=1$, and $q_i\leq m-1$ and thus
$k_{1,n-1}=m$ and $k_{1i}\leq m-1$.  By specializing (\ref{eqn:kij})
to $j=n-1$, we have
\begin{equation}
\label{eqn:kin_1}
k_{i,n-1}=\begin{cases}q_{n}-q_{i}&\text{if }r_{n}>r_{i}\\
q_{n}-q_{i}-1&\text{if }r_{n}<r_{i}.
\end{cases}.
\end{equation}
Then, by (\ref{eqn:kin_1}), $k_{i,n-1}=q_n-q_i-1$, so that
$$k_{1,i-1}+k_{i,n-1}=q_i+q_n-q_i-1=m-1.$$ Since $k_{ij}\leq m$ for
$1\leq i\leq j\leq n-1$, $k_{ij}=e_{ij}$ and the conditions in
Lemma~\ref{lem:indecomp} that $\Hthm$ be a separating wall are
fulfilled.
\end{proof}

We can also look at the regions which have $\Hthm$ as a separating
wall in terms of the geometry directly. Theorem~\ref{thm:altBaseCase}
is an alternate version of Theorem~\ref{thm:baseCase}.
\begin{proposition}
\label{prop-gamma}
Let $\lambda$ be an $n$-core and $w \in \coset$
be of minimal length such that
$\lambda = w \emptyset$.  Let $k = \lambda_1 + \frac{n-1}{2}$.
 Let $\gamma =  \alij{1}{n-1} +  \alij{2}{n-1} + \cdots + \alij{n-1}{n-1}$.
\begin{enumerate}
\item
Then the affine hyperplane $\Hak{\gamma}{k}$ passes through the
corresponding alcove $w^{-1}\Afund$.  More precisely,
$\brac{w^{-1}(\frac 1n \rho)}{\gamma} = k$.
\item
Then the affine hyperplane $\Hak{\gamma}{\lambda_1 }$ passes through
the corresponding alcove $w^{-1}\Afund$.  More precisely,
$\brac{w^{-1}(\Lambda_{r})}{\gamma} = \lambda_1 $, where $r \equiv
\lambda_1 \mod n$.
\end{enumerate}
\end{proposition}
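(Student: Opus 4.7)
The plan is to reduce both parts of the proposition to the single key identity $w(n) = n + \lambda_1$, where we view $w$ as an affine permutation via its window $(w(1), \ldots, w(n))$. The reduction is quick: since $\gamma = \sum_{i=1}^{n-1}(\e_i - \e_n)$ and $\sum_i v_i = 0$ for $v \in V$, a one-line computation gives $\brac{v}{\gamma} = -n v_n$, where $v_n$ is the last coordinate. The action of $\affS$ on $V$ extends by the periodicity $v_{j+n} = v_j - 1$ to a shift-action on $\reals^{\ints}$, under which $(w^{-1}(v))_n = v_{w(n)}$. So both parts reduce to computing $v_{w(n)}$ for $v = \rho/n$ in part (1) and $v = \Lambda_r$ in part (2).

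To prove $w(n) = n + \lambda_1$, I would induct on the length of $w$. The base case $w = e$, $\lambda = \emptyset$ gives $w(n) = n$. For the inductive step, write $\lambda = s_r\mu$ with $w = s_r w'$ reduced and $w'\emptyset = \mu$, so that $w(n) = s_r(n + \mu_1)$ by the inductive hypothesis $w'(n) = n + \mu_1$. The position $n + \mu_1$ has residue $\mu_1 \bmod n$, and a case analysis completes the step. In the case $r \equiv \mu_1 \pmod{n}$, the addable box $(1, \mu_1 + 1)$ has residue $r$, so $\lambda_1 = \mu_1 + 1$ and $s_r(n + \mu_1) = n + \mu_1 + 1$. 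Otherwise $\lambda_1 = \mu_1$, and one needs $s_r$ to fix $n + \mu_1$; the only possible obstruction is $\mu_1 \equiv r+1 \pmod{n}$, which cannot occur for an $n$-core $\mu$ admitting an addable box of residue $r$, as one sees from the hook-length constraints defining an $n$-core (equivalently, flushness of the abacus, via the elementary identity $h_{(1, \mu_k+1)} \equiv 0 \pmod n$ that it would force).

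Finally, with $w(n) = n + \lambda_1$ in hand, the conclusion of the proposition is a direct computation. Writing $\lambda_1 = An + r'$ with $r' \in \{0, \ldots, n-1\}$ and unpacking the periodicity gives $(\rho/n)_{w(n)} = -(\lambda_1 + (n-1)/2)/n$ for part (1) and $(\Lambda_{r'})_{w(n)} = -\lambda_1/n$ for part (2), so multiplying by $-n$ recovers $k = \lambda_1 + (n-1)/2$ and $\lambda_1$ respectively. The main obstacle is the inductive step for the key identity; specifically, ruling out the residue configuration $\mu_1 \equiv r+1 \pmod{n}$ requires the nontrivial observation, via hook lengths, that an $n$-core never has an addable box of residue equal to the residue of the last cell of its first row.
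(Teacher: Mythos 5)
Your proposal is correct, but it takes a genuinely different route from the paper. The paper also begins with the reduction $\brac{v}{\gamma} = -n\brac{v}{\e_n}$, but from there it writes $w = t_\beta u$ with $u \in \Sn$ and $\beta$ the vector of levels of first gaps in the balanced abacus, invokes a cited result (\cite{BJV}, Prop.\ 3.2.13) that the largest entry of that vector is $q = \ceil{\lambda_1}{n}$ with rightmost occurrence in position $r \equiv \lambda_1 \bmod n$ (whence $u^{-1}(\e_r) = \e_n$), and then evaluates the inner product directly. You instead pass to the window realization of $\affS$ and reduce everything to the identity $w(n) = n + \lambda_1$, proved by induction on $\ell(w)$. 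That identity is essentially the same underlying fact as the abacus statement the paper cites, so the two proofs meet in the middle; yours has the virtue of being self-contained (no appeal to \cite{BJV}) and of handling both parts of the proposition uniformly through a single computation of $v_{w(n)}$, at the cost of the inductive argument. I checked the points you flag as delicate and they hold up: the formula $(w^{-1}(v))_n = v_{w(n)}$ is consistent with the paper's explicit action of the generators on $V$ under the extension $v_{j+n} = v_j - 1$; if $w$ is a minimal coset representative and $s_r w < w$ then $s_r w$ is again one (so the induction is legitimate); and the excluded configuration is indeed impossible, since if $\mu$ is an $n$-core with $\mu_1 \equiv r+1 \bmod n$ and an addable box $(i,j)$ of residue $r$ with $i \ge 2$, then $h_{(1,j)}^{\mu} = (\mu_1 - 1) - (j - i) \equiv 0 \bmod n$ is a positive multiple of $n$, a contradiction. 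Were you to write this up in full, the inductive step (in particular the verification that $s_r w$ remains Grassmannian and the hook-length exclusion) is where the detail belongs; the rest is bookkeeping.
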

\begin{proof}
First, recall that $\rho = \frac 12 \sum_{\a \in \posRoots} \a =
(\frac{n-1}{2}, \frac{n-1}{2}-1, \ldots, \frac{1-n}{2})$.

Hence $\frac 1n \rho \in \Afund$ and so $w^{-1}(\frac 1n \rho) \in
w^{-1}\Afund$.  Let $\eta = \sum_i \ei$. Recall $V = \eta^\perp$ as
for all $(a_1, \ldots, a_n) \in V$ we have $\sum_i a_i = 0$. Observe
that for all $v \in V$, $\brac{v}{\gamma} = \brac{v}{\eta- n \e_n}
= \brac{v}{-n \e_n}$.
So it suffices to show $\brac{w^{-1}(\frac 1n \rho)}{\e_n} = -\frac kn$.

Recall we may write $w = t_\beta u$ where $\beta \in Q$ and $u \in
 \Sn$, where $t_\beta$ is translation by $\beta$. Please see
 \cite{hum90} for details.  Then $w^{-1} = u^{-1} t_{-\beta} =
 t_{u^{-1}(-\beta)} u^{-1}$ satisfies $u^{-1} (-\beta) \in Q^+$.

Write $\lambda_1 = nq - (n-r)$ with $0 \le n-r < n$.  Then $1 \le r
\le n$, $q = \ceil{\lambda_1}{n}$, and $r \equiv \lambda_1 \mod n$.
Let $a_i$ be the level of the first gap in runner $i$ of the balanced
abacus diagram for $\lambda$ and write $\vec
n(\lambda)=(a_1,a_2,\ldots,a_n)$. It is worth noting that $\vec
n(\lambda)=w(0,\ldots,0)$.  By \cite[Prop 3.2.13]{BJV}, the largest
entry of $\vec n(\lambda)$ is $a_r = q$ and the rightmost occurrence
of $q$ occurs in the $r^{\mathrm{th}}$ position.  Hence the smallest
entry of $-\vec n(\lambda)$ is $-q$ and its rightmost occurrence is
also in position $r$.  Since $u^{-1} \in \Sn$ is of minimal length
such that $u^{-1} (-\vec n(\lambda)) \in Q^+$, we have that
$u^{-1}(\e_r) = \e_n$.

Now we compute
\begin{eqnarray*}
\brac{w^{-1}(\frac 1n \rho)}{\e_n} &=&
		\brac{t_{u^{-1}(-\vec n(\lambda))} u^{-1} (\frac 1n \rho)}{\e_n} \\
&=&
\brac{u^{-1}(-\vec n(\lambda))}{\e_n}  + \brac{u^{-1} (\frac 1n \rho)}{\e_n}
\\
&=&
 \brac{u^{-1}(-\vec n(\lambda))}{\e_n}  + \brac{ \frac 1n \rho}{u(\e_n)}
\\
&=&
-q + \brac{ \frac 1n \rho}{\e_r}
	= -q +  \frac 1n(\frac{n-1}{2} - (r-1))
\\
&=&
  -\frac 1n(nq - (n-r) + \frac{n-1}{2})
  	 = -\frac 1n(\lambda_1 + \frac{n-1}{2})
\\
&=&
 -\frac kn.
\end{eqnarray*}

\medskip

For the second statement, note the fundamental weight $\Lambda_j \in V$ has
coordinates given by
$$\Lambda_j = \frac 1n ( (n-j)(\e_1 + \cdots + \e_j) - j(\e_{j+1} + \cdots + \e_n)).$$

So $\Lambda_j \in \Hak{\a_i}{0}$ for $i \neq j$, $\Lambda_j \in
\Hak{\a_j}{1}$, and the $\{\Lambda_j \mid 1 \le j \le n\} \cup \{0\}$
are precisely the vertices of $\Afund$.  For the notational
consistency of this statement and others below, we will adopt the
convention that $\Lambda_0 = 0$ (which is consistent with considering
$0 \in \Hak{\theta}{0} = \Hak{\alpha_0}{1}$).  Hence we have that
$w^{-1}(\Lambda_j) \in w^{-1}\Afund$.

As above we compute
\begin{eqnarray*}
- \frac 1n \brac{w^{-1}(\Lambda_{r})}{\gamma} &=&
\brac{w^{-1}(\Lambda_{r})}{\e_n}
\\  &=&
-q + \brac{ \Lambda_{r}}{\e_r}
	= -q +  \frac 1n(n - r)
\\
&=&
  -\frac 1n(nq -(n-r))
  	 = -\frac 1n(\lambda_1).
\end{eqnarray*}

\end{proof}
%
\begin{proposition}
\label{prop-Gamma}
Let $\lambda$ be an $n$-core and $w \in \coset$
be of minimal length such that
$\lambda = w \emptyset$.  Let $K = \ell(\lambda) + \frac{n-1}{2}$.
 Let $\Gamma =  \alij{1}{n-1} +  \alij{1}{n-2} + \cdots + \alij{1}{1}$.
\begin{enumerate}
\item
Then the affine hyperplane $\Hak{\Gamma}{K}$ passes through the corresponding
alcove $w^{-1}\Afund$.
More precisely, $\brac{w^{-1}(\frac 1n \rho)}{\Gamma} = K$.
\item
Then the affine hyperplane $\Hak{\Gamma}{\ell(\lambda)}$ passes through the corresponding
alcove $w^{-1}\Afund$.
More precisely, $\brac{w^{-1}(\Lambda_{s-1})}{\Gamma} = \ell(\lambda)$,
where $1-s \equiv \ell(\lambda) \mod n$.
\end{enumerate}
\end{proposition}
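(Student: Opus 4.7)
The plan is to mirror the argument for Proposition~\ref{prop-gamma}, with $\Gamma$ replacing $\gamma$ and $\e_1$ replacing $\e_n$. A short calculation gives $\Gamma = \sum_{j=1}^{n-1}(\e_1 - \e_{j+1}) = n\e_1 - \eta$, where $\eta = \sum_i \e_i$; since $V = \eta^\perp$, for any $v \in V$ one has $\brac{v}{\Gamma} = n\brac{v}{\e_1}$. This reduces both parts of the proposition to computing a single scalar $\brac{w^{-1}(v)}{\e_1}$: it must equal $K/n$ when $v = \tfrac{1}{n}\rho$ for (1), and $\ell(\lambda)/n$ when $v = \Lambda_{s-1}$ for (2).

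Writing $w = t_\beta u$ with $\beta \in Q$ and $u \in \Sn$ as in the proof of Proposition~\ref{prop-gamma}, the identity $w^{-1} = t_{u^{-1}(-\beta)} u^{-1}$ yields
\[
\brac{w^{-1}(v)}{\e_1} = \brac{u^{-1}(-\beta)}{\e_1} + \brac{v}{u(\e_1)},
\]
so the task reduces to identifying $u(\e_1)$ and the first coordinate of $u^{-1}(-\beta)$. The new ingredient I need is a dual of \cite[Prop.~3.2.13]{BJV}: whereas the original identifies the \emph{maximum} of $\vec n(\lambda)$ and its \emph{rightmost} position in terms of $\lambda_1$, I need the \emph{minimum} of $\vec n(\lambda)$ and its \emph{leftmost} position in terms of $\ell(\lambda) = \lambda'_1$. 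I plan to obtain this by applying BJV to the conjugate $n$-core $\lambda'$, after checking that $\vec n(\lambda')$ is obtained from $\vec n(\lambda)$ by sign change and coordinate reversal. The expected outcome is that the minimum of $\vec n(\lambda)$ equals $-Q'$ with $Q' = \lceil \ell(\lambda)/n \rceil$, and its leftmost occurrence is at position $s$, where $s \in \{1,\ldots,n\}$ is the unique index with $1-s \equiv \ell(\lambda) \pmod n$. The minimality-of-length condition on $u^{-1}$ then forces $u^{-1}(\e_s) = \e_1$, that is, $u(\e_1) = \e_s$, exactly parallel to the identity $u(\e_n) = \e_r$ used in the preceding proof.

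With $u(\e_1) = \e_s$ in hand, both parts fall out by calculations formally identical to those of Proposition~\ref{prop-gamma}. For (1), $\brac{u^{-1}(-\beta)}{\e_1} = Q'$ and $\brac{\tfrac{1}{n}\rho}{\e_s} = \tfrac{1}{n}\bigl(\tfrac{n-1}{2} - (s-1)\bigr)$; combined with the identity $\ell(\lambda) = nQ' - (s-1)$ these sum to $K/n$. For (2), the coordinate formula for $\Lambda_{s-1}$ recalled in the proof of Proposition~\ref{prop-gamma} gives $\brac{\Lambda_{s-1}}{\e_s} = -\tfrac{s-1}{n}$, and the analogous sum becomes $Q' - \tfrac{s-1}{n} = \ell(\lambda)/n$. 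Multiplying by $n$ in each case yields the desired identities for $\Gamma$. The main obstacle I anticipate is establishing the dual of BJV with the correct sign and modular conventions and verifying the minimum-position statement carefully; once that lemma is in place, the remainder is essentially bookkeeping structurally identical to the original argument.
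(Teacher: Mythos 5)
Your proposal is correct and follows essentially the same route as the paper: reduce via $\brac{v}{\Gamma}=n\brac{v}{\e_1}$ on $V=\eta^\perp$, use $w=t_\beta u$ to isolate $u(\e_1)=\e_s$ from the leftmost minimum of $\vec n(\lambda)$, and finish with the same arithmetic. The only (harmless) difference is that you derive the ``leftmost minimum equals $-\lceil\ell(\lambda)/n\rceil$'' fact by applying \cite{BJV} to the conjugate core, whereas the paper cites \cite{BJV} for it directly; your reversal-and-negation argument for $\vec n(\lambda')$ checks out and yields the same index convention $1-s\equiv\ell(\lambda)\bmod n$.
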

\begin{proof}
First note $\brac{v}{\Gamma} = \brac{v}{n \e_1}$ for all $v \in
V$, so it suffices to compute $\brac{w^{-1}(\frac 1n \rho)}{n \e_1}$.

Next, note $\Gamma = n \e_1$.

Write  $\ell(\lambda)  = nM + (1-s)$ with $1 \le s \le n$, so
$-M =- \ceil{\ell(\lambda)}{n}$.
By \cite{BJV}, the smallest entry of $\vec n(\lambda) = (a_1, a_2, \ldots, a_n)$
is $a_s = -M$ and the leftmost occurrence of $-M$ occurs in the $s^{\mathrm{th}}$ position.
Hence the largest entry of $-\vec n(\lambda)$ is $M$ and its leftmost occurrence is
also in position $s$.   Then for $u$ as above, it is clear $u(\e_1) = \e_s$.
So, by a similar computation as above,
\begin{eqnarray*}
\brac{w^{-1}(\frac 1n \rho)}{\Gamma}   &=&
		n\brac{w^{-1}(\frac 1n \rho)}{\e_1}
 \\
&=&
n\brac{u^{-1}(-\vec n(\lambda))}{\e_1}  +n \brac{u^{-1} (\frac 1n \rho)}{\e_1}
\\
&=&
nM + \brac{ \rho}{\e_s}
	= n M +  \frac{n-1}{2} - (s-1)
\\
&=&
  	 \ell(\lambda)+ \frac{n-1}{2} = K.
\end{eqnarray*}

Likewise,
\begin{eqnarray*}
\brac{w^{-1}(\Lambda_{s-1})}{\e_1} &=&
M + \brac{ \Lambda_{s-1}}{\e_s}
	= M +  \frac 1n( - (s-1))
\\
&=&
  \frac 1n(nM + (1-s))
  	 = \frac 1n \ell(\lambda).
\end{eqnarray*}
\end{proof}
Taking subscripts $\mod n$ we have $\brac{w^{-1}(\Lambda_{\lambda_1})}{\gamma} = \lambda_1  $ and
 $\brac{w^{-1}(\Lambda_{-\ell(\lambda)})}{\Gamma} =  \ell(\lambda)$.
\begin{corollary}
\label{cor:hookInnerProd}
$n\brac{w^{-1}(\frac 1n \rho)}{\theta} =  \lambda_1 + \ell(\lambda) +n-1$.
\end{corollary}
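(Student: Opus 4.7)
The plan is to reduce the corollary to the two preceding propositions by recognizing $\theta$ as a linear combination of $\gamma$ and $\Gamma$.

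First I would unpack the three vectors in the standard basis. Since $\alij{i}{n-1} = \ei - \e_n$, one has $\gamma = \sum_{i=1}^{n-1}(\ei - \e_n) = \eta - n\e_n$, where $\eta = \sum_i \ei$. Similarly, $\alij{1}{j} = \e_1 - \e_{j+1}$ gives $\Gamma = (n-1)\e_1 - (\eta - \e_1) = n\e_1 - \eta$. Adding these yields $\gamma + \Gamma = n\e_1 - n\e_n = n\theta$, since $\theta = \e_1 - \e_n$. This identity is the crux and the only nontrivial observation; from here everything is a one-line computation.

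Next I would take the inner product of $w^{-1}(\tfrac{1}{n}\rho)$ with both sides of $n\theta = \gamma + \Gamma$, using bilinearity to split:
\begin{equation*}
n\brac{w^{-1}(\tfrac 1n \rho)}{\theta} = \brac{w^{-1}(\tfrac 1n \rho)}{\gamma} + \brac{w^{-1}(\tfrac 1n \rho)}{\Gamma}.
\end{equation*}
Proposition \ref{prop-gamma}(1) evaluates the first summand as $k = \lambda_1 + \tfrac{n-1}{2}$, and Proposition \ref{prop-Gamma}(1) evaluates the second as $K = \ell(\lambda) + \tfrac{n-1}{2}$. Summing gives $\lambda_1 + \ell(\lambda) + (n-1)$, which is the claimed identity.

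There is essentially no obstacle here; the work was already done in the two propositions, and the only thing to notice is the additivity $\gamma + \Gamma = n\theta$, which follows directly from the definitions of $\gamma$, $\Gamma$, and $\theta = \alij{1}{n-1}$ in coordinates. The statement is therefore best read as a bookkeeping corollary that packages $\lambda_1$ and $\ell(\lambda)$ (the first row and first column lengths of $\lambda$) into a single inner product against the highest root, matching the classical identity $h_{11}^\lambda = \lambda_1 + \ell(\lambda) - 1$ up to the shift by $n$ coming from $\rho$.
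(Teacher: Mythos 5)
Your proof is correct and is exactly the route the paper intends: the corollary is stated without proof precisely because it follows by adding the two displayed identities from Propositions~\ref{prop-gamma}(1) and~\ref{prop-Gamma}(1), using the observation (already implicit in those proofs, where $\brac{v}{\gamma}=\brac{v}{-n\e_n}$ and $\brac{v}{\Gamma}=\brac{v}{n\e_1}$ on $V$) that $\gamma+\Gamma$ acts as $n\theta$. Nothing is missing.
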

Note that when $\lambda \neq \emptyset$, the above quantity is
 $h_{11}^{\lambda} +n$ where $h_{11}^{\lambda}$ is the
 hooklength of the first box.  (One could also set
 $h_{11}^{\emptyset} = -1$.)

\begin{theorem}
\label{thm:altBaseCase}
Let $\FVMap:\core{n}\to\domAlcoves_n$ be the bijection described in
Section~\ref{subsec:FVBij2}, let $\region{}\in\shinm$ have $m$-minimal
alcove $\A$, and let $\lambda$ be the $n$-core such that
$\FVMapArg{\lambda{}}=\A$.  Then $\Hthm$ is a separating wall for the
region $\region{}$ if and only if $h_{11}^{\lambda}=n(m-1)+1$.
\end{theorem}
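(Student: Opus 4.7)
The plan is to exploit the fact that $w^{-1}(\frac{1}{n}\rho)$ is the barycenter of the $m$-minimal alcove $\A = w^{-1}\Afund$. Indeed, the vertices of $\Afund$ are $\{0, \Lambda_1, \ldots, \Lambda_{n-1}\}$ and their average is $\frac{1}{n}\rho$ since $\rho = \sum_{j=1}^{n-1}\Lambda_j$; applying $w^{-1}$, the vertices of $\A$ become $\{w^{-1}(\Lambda_j)\}_{0 \le j \le n-1}$ with barycenter $w^{-1}(\frac{1}{n}\rho)$. Writing $w = t_\beta u$ with $\beta \in Q$ and $u \in \Sn$ as in the proofs of Propositions~\ref{prop-gamma} and \ref{prop-Gamma}, one computes $\brac{w^{-1}(\Lambda_j)}{\theta} = \brac{\Lambda_j - \beta}{u(\theta)} \in \ZZ$, since $\beta \in Q$ and $u(\theta)$ is a root.

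The key observation is that Corollary~\ref{cor:hookInnerProd} asserts $\brac{w^{-1}(\frac{1}{n}\rho)}{\theta} = \frac{h_{11}^{\lambda} + n}{n}$, which must equal the arithmetic mean of the $n$ integer vertex $\theta$-values. For the forward direction, suppose $h_{11}^{\lambda} = n(m-1)+1$; then this mean equals $m + \frac{1}{n} \in (m, m+1)$, so the barycenter---and hence all of $\A$---lies in the slab $m < \brac{v}{\theta} < m+1$, and each vertex $\theta$-value therefore lies in $\{m, m+1\}$. Since the mean is $m + \frac{1}{n}$, exactly $n-1$ vertices have value $m$ and one has value $m+1$. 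Thus $\Hthm$ contains a facet of $\A$ and hence is a wall of the region $\region{}$ containing $\A$; since $\A$ (and hence $\region{}$) lies on the far side of $\Hthm$ from $\Afund$, it is a separating wall.

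Conversely, assume $\Hthm$ is a separating wall of $\region{}$. Then $\A$ must be adjacent to $\Hthm$---for otherwise some alcove of $\region{}$ lying between $\A$ and $\Hthm$ would be closer to $\Afund$, contradicting the $m$-minimality of $\A$. So $\Hthm$ is a wall of $\A$, with $n-1$ of its vertices at $\theta$-value $m$; the remaining vertex has integer $\theta$-value strictly greater than $m$ and at most $m+1$, since otherwise $\Hak{\theta}{m+1}$ would slice through the interior of $\A$. Hence the vertex values are $(m, m, \ldots, m, m+1)$, their mean is $m + \frac{1}{n}$, and Corollary~\ref{cor:hookInnerProd} yields $h_{11}^{\lambda} = n(m-1)+1$. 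The main obstacle is justifying the geometric claim that the $m$-minimal alcove is adjacent to any separating wall of its region; with this in hand, the rest follows from straightforward vertex-counting.
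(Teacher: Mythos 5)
Your argument is correct in substance and reaches the result by a genuinely cleaner mechanism than the paper's. Both proofs ultimately live on the vertices $w^{-1}(\Lambda_j)$ of the $m$-minimal alcove, but where the paper evaluates each vertex individually --- writing $\brac{w^{-1}(\Lambda_j)}{\theta} = M+q+\brac{\Lambda_j}{\e_s-\e_r}$ via Propositions~\ref{prop-gamma} and \ref{prop-Gamma} and then running a two-case analysis on $r-s$ --- you use only the \emph{average} of these values (which is exactly Corollary~\ref{cor:hookInnerProd}, since $\rho=\sum_j\Lambda_j$ makes $w^{-1}(\tfrac1n\rho)$ the barycenter), together with the integrality of each $\brac{w^{-1}(\Lambda_j)}{\theta}=\brac{\Lambda_j-\beta}{u(\theta)}$ and a pigeonhole count. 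This buys a shorter argument that never needs to identify which vertex is the exceptional one, at the cost of losing the finer information the paper's case analysis yields (e.g.\ its side remark about $h_{11}^\lambda=n(m-1)-1$). Interestingly, the authors left a commented-out fragment in the source indicating they had noticed that Corollary~\ref{cor:hookInnerProd} alone nearly does the job.

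The one point requiring care is the reduction you flag yourself: that $\Hthm$ being a separating wall of the \emph{region} $\region{}$ forces it to be a wall of the $m$-minimal alcove $\A$, with $\A$ on the far side. Your one-line justification (``some alcove of $\region{}$ lying between $\A$ and $\Hthm$ would be closer to $\Afund$'') is not a proof: an alcove of $\region{}$ abutting the facet of $\region{}$ on $\Hthm$ need not be comparable to $\A$ in any obvious way, and ``between'' is doing unexamined work. You should know, however, that the paper makes exactly the same reduction without proof --- it is the ``Equivalently for alcoves'' assertion opening Section~\ref{sec:firstSepWall}, invoked in the first line of the paper's proof of Theorem~\ref{thm:altBaseCase} --- so you are not behind the paper here. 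The fact is true, and for $\a=\theta$ it can be extracted from Lemma~\ref{lem:indecomp} combined with the Shi-coordinate argument in the first paragraph of the proof of Theorem~\ref{thm:baseCase}: the separating-wall condition forces $e_{ij}=k_{ij}$ for all $(i,j)$, hence $k_{1,n-1}=m$ and $k_{1t}+k_{t+1,n-1}=m-1$ for all $t$, and by Lemma~\ref{lem:alcoveCoords} these are precisely the conditions under which decreasing $k_{1,n-1}$ by one still yields an alcove, i.e.\ under which $\Hthm$ supports a facet of $\A$. If you supply that (or simply cite it as the paper does), your proof is complete; the forward direction needs no such care, since a facet of $\A$ lying in $\Hthm\in\Hyp{m}$ is automatically a facet of the region containing $\A$.
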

\begin{proof}
  Let $r$, $s$, $q$, $M$, and $w$ be as in
  Propositions~\ref{prop-gamma} and \ref{prop-Gamma}.  Suppose that
  $\Hthm$ is a separating wall for $\region{}$ and let $i$ be such
  that $w^{-1}\Afund\subseteq\Hp{\theta}{m}$ and
  $w^{-1}s_i\Afund\subseteq\Hn{\theta}{m}$.  Recall $\Lambda_j \in
  \Hak{\ai}{0}$ for all $j \neq i$, and $\Lambda_i \in \Hak{\ai}{1}$.
  Hence $w^{-1}(\Lambda_j) \in \Hak{\theta}{m}$ but $w^{-1}(\Lambda_i)
  \in \Hak{\theta}{m+1}$.  In fact, this configuration of vertices
  characterizes separating walls.

Note
\begin{equation}
\label{eq-sr}
\brac{\Lambda_j}{\e_s - \e_r}=  \begin{cases} 1 & \text{ if } s \le j < r\\
		-1 & \text{ if } s > j \ge r \\
		0  & \text{else.} \end{cases}
\end{equation}
By Propositions \ref{prop-gamma} and \ref{prop-Gamma},
$\brac{w^{-1}(\Lambda_j)}{\theta} = M + q + \brac{\Lambda_j}{\e_s - \e_r}$.
Because $\Hak{\theta}{m}$ is a separating wall,
this yields $M + q + \brac{\Lambda_j}{\e_s - \e_r} = m + \delta_{i,j}$.
We must consider two cases.
First, $M+q=m$ and $\brac{\Lambda_j}{\e_s - \e_r} =  \delta_{i,j}$.
In other words,
by \eqref{eq-sr}
$s \le j < r$ implies $j=i$.
 More precisely,
$r-s = 1$,  $s=i$, and $\e_s - \e_r = \ai$.
In the second case,
 $M+q-1=m$ and $\brac{\Lambda_j}{\e_s - \e_r} =  \delta_{i,j} -1$.
In other words, $s > j \ge  r$  for all $1 \le j < n$ (and recall
$\brac{\Lambda_0}{\e_s - \e_r} = \brac{0}{\e_s - \e_r} = 0$).
 More precisely,
$r-s = 1- n$   and $\e_s - \e_r = -\theta$.

Putting this all together for $\lambda \neq \emptyset$,
\begin{eqnarray*}
h_{11}^\lambda
&=&
\ell(\lambda) + \lambda_1 -1 \\
&=&
(nM+ 1-s) + (n q - (n-r)) -1 \\
&=&
n(M+q) -n  + (r-s)
= \begin{cases} nm -n +1 & \text{if } \e_s - \e_r = \ai \\ n(m+1) -n +(1-n) & \text{if }  \e_s - \e_r = -\theta \end{cases} \\
&=&
n(m-1) + 1.
\end{eqnarray*}
\omitt{\mvNew{ cases labeled, ok ? also  did FTVbib fix
changed  germ to mathfrak instead}}

Conversely, if $h_{11}^{\lambda}=n(m-1)+1$, then by
the computation above $n(M+q) -n  + (r-s) = nm - n +1$, which forces
$n(M+q -1 -m +1) = 1 +s -r$.  Note $2-n \le 1+s-r \le n$.
  If $1+s-r < n$, divisibility forces $0 = 1+s-r = M+q-m$.
In other words, $\e_s - \e_r = \ai$ for $i=s$, and we compute as above
that $\brac{w^{-1}(\Lambda_j)}{\theta} = M + q + \delta_{i,j}$ showing
$\Hthm$ is a separating wall.
If instead $1+s-r = n$, this forces $M+q-m=1$ and $\e_s - \e_r = \theta$.
Hence $\brac{w^{-1}(\Lambda_j)}{\theta} = M + q-1 =m $ for all $j<n$,
but $\brac{w^{-1}(0)}{\theta} = M + q = m+1$,
 so that $\Hthm$ is again a separating wall for $w^{-1}\Afund$.

As a side note, similar calculations show that $h_{11}^\lambda = n(m-1) - 1$
if and only if either $M+q=m$ and $r-s = -1$, or $M+q=m-1 $ and $r-s = n-1$.
In both cases $\Hthm$ will not be
a separating  wall for $w^{-1}\Afund$, but will be
a separating wall for $w^{-1} s_i\Afund$ where $i=s-1$.
 One vertex of $w^{-1}\Afund$ lies in $\Hak{\theta}{m-1}$
and the rest in $\Hthm$.

\omitt{
Corollary~\ref{cor:hookInnerProd}, $\brac{w^{-1}(\frac 1n
\rho)}{\theta} = m+\frac{1}{n}$, so that $\Hthm$ must be a separating
wall.
}
\end{proof}

\section{Generating functions}
\label{sec:genfunc}
We use $\sepSet{n}{k}{\a}$ to denote the set of regions in $\shinm$
which have $\Halpha{}{}{k}$ as a separating wall. See
Figure~\ref{fig:alpha1Wall}. In the language of \cite{A2005a}, these
are the regions whose corresponding co-filtered chain of ideals have
$\a$ as an indecomposable element of rank $k$.

\begin{figure}[ht]
\begin{center}
\fpsacInclude{  \begin{tikzpicture}[scale=.7,font=\footnotesize]}
\fpsacOmitt{\begin{tikzpicture}[scale=.8,information text/.style={rounded corners,fill=red!10,inner sep=1ex}]}
\begin{scope}
\tikzstyle{every node}=[font=\footnotesize]
[4,1,2]

\def\a{1.73205}

\filldraw[fill = yellow, fill opacity = 1, draw opacity = 0] 
(2,0)--(5,3*\a)--(6,3*\a)--(6,0)--cycle;

\path [draw = black, very thick, draw opacity = 1, shift = {(0,0)}]
(0,0) -- +(60:6) node[black,above] {$H_{\alpha_1,0}$};

\path [draw = black,very thick, draw opacity = 1] (0,0) -- +(6,0) node[black,right] {$H_{\alpha_2,0}$};

\path [draw = red,very thick,draw opacity = 1] (0,0) -- (0,0);

\path [draw = black, very thick, draw opacity = 1, shift = {(1,0)}]
(0,0) -- +(60:6) node[black,above] {$H_{\alpha_1,1}$};

\path [draw = black,very thick, draw opacity = 1] (0.5,0.866025) -- +(6,0);
\draw [shift = {(0.5,0.866025)}] (6.5,0) node {$H_{\alpha_2,1}$};

\path [draw = black,very thick,draw opacity = 1] (0.5,0.866025) --
(1,0) node[black,below] {$H_{\alpha_{12},1}$};

\path [draw = red, very thick, draw opacity = 1, shift = {(2,0)}]
(0,0) -- +(60:6) node[black,above] {$H_{\alpha_1,2}$};

\path [draw = red, very thick, draw opacity = 1] (1,1.73205) -- +(6,0);
\draw [shift = {(1,1.73205)}] (6.5,0) node {$H_{\alpha_2,2}$};

\path [draw = red,very thick,draw opacity = 1] (1,1.73205) -- (2,0) node[black,below] {$H_{\alpha_{12},2}$};



\end{scope}
\end{tikzpicture}\\
\caption{\small There are three regions in $\sepSet{3}{2}{\a_1}$}
\label{fig:alpha1Wall}
\end{center}
\end{figure}

In this section, we present a generating function for regions in
$\sepSet{n}{k}{\a}$. In Section~\ref{sec:sephyp}, we discuss a
recursion for regions. The recursion is found by adding all possible
first columns to Shi tableaux for regions in $\shi{}{n-1}{m}$ to
create all Shi tableaux for regions in $\shinm$. The generating
function keeps track of the possible first columns and rows.  We use
two statistics $\rStat{}$ and $\cStat{}$ on regions in the extended
Shi arrangement. Let $\region{}\in\shinm$ and define

$$\rStat{\region{}}=|\{(j,k):\region{}\textrm{ and
}\Afund\textrm{ are separated by }\Halpha{1}{j}{k}\textrm{ and
}1\leq k\leq m\}|$$ and
$$\cStat{\region{}}=|\{(i,k):
\region{}\textrm{ and }\Afund\textrm{ are separated by
}\Halpha{i}{n-1}{k}\textrm{ and }1\leq k\leq m\}|.$$

$\rStat{\region{}}$ counts the number of translates of
$\Hak{\a_{1j}}{0}$ which separate $\region{}$ from $\Afund$, for
$1\leq j\leq n-1$. Similarly for $\cStat{\region{}}$ and translates of $\Hak{\a_{i,n-1}}{0}$.

The generating function is
$$\wP{n}{m}{\aij}{p}{q}=\sum_{\region{}\in\sepSet{n}{m}{\aij}}p^{\cStat{\region{}}}q^{\rStat{\region{}}}.$$
\begin{example}
$\wP{3}{2}{\a_1}{p}{q}=p^4q^2+p^4q^3+p^4q^4.$
\end{example}

We let $\polyInt{k}{p,q}=\sum_{j=0}^{k-1}p^jq^{k-1-j}$ and
$\polyInt{k}{q}=\polyInt{k}{1,q}$. We will also need to truncate
polynomials and the notation we use for that is
$$\trunc{\sum_{j=0}^{j=n}a_jq^j}{q}{N}=\sum_{j=0}^{j=N}a_jq^j.$$

The statistics are related to the $n$-core partition assigned by $\RiMap$ to the
$m$-minimal alcove for the region.

\begin{proposition}
\label{prop:cAndr}
Let $\lambda$ be an $n$-core with vector of level numbers
$(b_0,\ldots,b_{n-1})$ and suppose $\RiMap(\lambda)=\region{}$ and
$\region{}\in\sepSet{n}{m}{\theta}$. Then
$\rStat{\region{}}=m+\sum_{i=2}^{n-1}b_i=\ell(\lambda)$ and
$\cStat{\region{}}=m+\sum_{i=2}^{n-1}(m-1-b_i)=\lambda_1$.
\end{proposition}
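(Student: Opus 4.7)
The plan is to reduce both statistics to sums of Shi tableau entries, use Lemma~\ref{lem:indecomp} together with the hypothesis that $\Hthm$ is a separating wall, and then identify the relevant entries with the level numbers $b_i$ by revisiting the inverse of $\RiMap$ as set up in the proof of Theorem~\ref{thm:baseCase}.

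First, I would observe that for any dominant region $\region{}\in\shinm$ with Shi tableau entries $\{e_{ij}\}$, the hyperplane $\Hak{\aij}{k}$ separates $\region{}$ from $\Afund$ (for $1\le k\le m$) exactly when $k\le e_{ij}$. Indeed, the $m$-minimal alcove of $\region{}$ lies strictly on the side $\brac{v}{\aij}>k_{ij}$ of each hyperplane it passes (Lemma~\ref{lem:alcoveCoords}), and the region extends across successive hyperplanes until it either exits the $m$-Shi arrangement or the minimal alcove is reached, so exactly $\min(k_{ij},m)=e_{ij}$ of the hyperplanes $\Hak{\aij}{1},\ldots,\Hak{\aij}{m}$ separate $\region{}$ from $\Afund$. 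Summing over rows and columns of the staircase gives
\[
\rStat{\region{}}=\sum_{j=1}^{n-1}e_{1j},\qquad \cStat{\region{}}=\sum_{i=1}^{n-1}e_{i,n-1}.
\]

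Next, I would invoke Lemma~\ref{lem:indecomp}, which, because $\Hthm$ is a separating wall, gives $e_{1,n-1}=m$ and $e_{1t}+e_{t+1,n-1}=m-1$ for $1\le t\le n-2$. Substituting $e_{t+1,n-1}=m-1-e_{1t}$ into the column sum produces
\[
\cStat{\region{}}=m+\sum_{t=1}^{n-2}(m-1-e_{1t})=m+(n-2)(m-1)-\sum_{j=1}^{n-2}e_{1j},
\]
while the row sum becomes $\rStat{\region{}}=m+\sum_{j=1}^{n-2}e_{1j}$. Thus both statistics are determined by the first-row partial sum $\sum_{j=1}^{n-2}e_{1j}$.

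Now I would identify this partial sum with $\sum_{i=2}^{n-1}b_i$ by unwinding $\RiMap^{-1}$ as in the proof of Theorem~\ref{thm:baseCase}. Since $\Hthm$ is a separating wall, the Shi coordinates of $\region{}$ coincide with those of its minimal alcove, so $e_{1j}=k_{1j}=q_{j+1}$ where $p_i=nq_i+r_i$ are the intermediate values; moreover that proof showed $q_1=0$, $q_n=m$ (with $r_n=1$), and the multiset $\{q_2,\ldots,q_{n-1}\}$ is precisely $\{b_2,\ldots,b_{n-1}\}$ (since after removing the bead on runner $1$ at level $m$ and the bead on runner $0$ at level $0$, what remains of the level vector is a permutation of the remaining $q_i$). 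Hence $\sum_{j=1}^{n-2}e_{1j}=\sum_{i=2}^{n-1}b_i$, which immediately yields
\[
\rStat{\region{}}=m+\sum_{i=2}^{n-1}b_i,\qquad \cStat{\region{}}=m+\sum_{i=2}^{n-1}(m-1-b_i).
\]

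Finally, the reformulations in terms of $\lambda$ are direct: since $b_0=0$ and $b_1=m$, the total $\ell(\lambda)=\sum_{i=0}^{n-1}b_i$ equals $m+\sum_{i=2}^{n-1}b_i=\rStat{\region{}}$; and using $h_{11}^{\lambda}=\lambda_1+\ell(\lambda)-1=n(m-1)+1$ from Theorem~\ref{thm:baseCase} gives $\lambda_1=n(m-1)+2-\ell(\lambda)$, which a short arithmetic check shows equals $m+(n-2)(m-1)-\sum_{i=2}^{n-1}b_i=\cStat{\region{}}$. The only step that requires genuine care is the bookkeeping in paragraph~3 matching $\{q_i\}$ to $\{b_i\}$; everything else is a straightforward translation through Lemmas~\ref{lem:alcoveCoords} and \ref{lem:indecomp}.
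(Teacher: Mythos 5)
Your proposal is correct and follows essentially the same route as the paper: both reduce $\rStat{}$ and $\cStat{}$ to first-row and last-column sums of the Shi tableau, use Lemma~\ref{lem:indecomp} to convert the column sum into $m+\sum_t(m-1-e_{1t})$, and identify $\sum_j k_{1j}$ with $\sum_i b_i$ through the intermediate quantities of $\RiMap$ (the paper computes $\sum_j k_{1j}=\sum_i\lfloor \tilde p_i/n\rfloor=\sum_i b_i$ directly, while you invoke the equivalent multiset identity $\{q_1,\dots,q_n\}=\{b_0,\dots,b_{n-1}\}$ together with $q_1=0=b_0$ and $q_n=m=b_1$). The extra details you supply — why $\rStat{}=\sum_j e_{1j}$, and the arithmetic check that $m+(n-2)(m-1)-\sum b_i=\lambda_1$ via $h_{11}^\lambda=\lambda_1+\ell(\lambda)-1$ — are points the paper leaves implicit, and they check out.
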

\begin{proof}

Let $\lambda$, $(b_0,\ldots,b_{n-1})$, and $\region{}$ be as in the
statement of the claim. Let $\{e_{ij}\}$ be the region coordinates for
$\region{}$ and $\{k_{ij}\}$ be the coordinates of $\region{}$'s
$m$-minimal alcove, and let $\{p_i\}$ and $\{\tilde{p}_i\}$ be as in
the definition of $\RiMap$.  Then
\begin{eqnarray*}
\rStat{\region{}}&=&e_{1,n-1}+e_{1,n-2}+\ldots+e_{11}\\
&=&k_{1,n-1}+k_{1,n-2}+\ldots+k_{11}\\ &=&\lfloor
\frac{p_n}{n}\rfloor+\ldots+\lfloor \frac{p_1}{n}\rfloor\\ &=&\lfloor
\frac{\tilde{p}_n}{n}\rfloor+\ldots+\lfloor
\frac{\tilde{p}_1}{n}\rfloor\\ &=&\sum_{i=0}^{n-1}b_i
\end{eqnarray*}
The second part of the claim follows since
$\cStat{\region{}}=k_{1,n-1}+k_{2,n-1}+\ldots+k_{n-1,n-1}$ and
$k_{i,n-1}=(m-1)-k_{1,i-1}$ for $\region{}\in\sepSet{n}{m}{\theta}$
and $2\leq i\leq n-1$.
\end{proof}

We can also relate the statistics $\rStat{}$ and $\cStat{}$ to the
$n$-core partition corresponding under $\FVMap$ to the $m$-minimal
alcove of the region $\region{}$.

For now, let $\kwa$ be the Shi coordinate of $\wA$.
Note $\kwa < \brac{w^{-1}(\frac 1n \rho)}{\a} < \kwa +1$,
so $\kwa = \floor{\brac{w^{-1}(\frac 1n \rho)}{\a}}$.
\begin{proposition}
\label{prop:cAndr2}
Let $\lambda$ be an $n$-core and $w \in \coset$
be of minimal length such that $\lambda = w \emptyset$.
\begin{enumerate}
\item
Then $\sum_i  \kwaij{i}{n-1} = \lambda_1$.
\item
Then $\sum_j  \kwaij{1}{j} = \ell(\lambda)$.
\end{enumerate}
\end{proposition}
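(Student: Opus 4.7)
The plan is to compute each individual inner product $\brac{w^{-1}(\tfrac 1n \rho)}{\a_{i,n-1}}$ explicitly and then split it into integer and fractional parts. Writing $w^{-1} = t_{u^{-1}(-\vec n(\lambda))} u^{-1}$ exactly as in the proof of Proposition~\ref{prop-gamma}, and using $\a_{i,n-1} = \e_i - \e_n$, a direct calculation gives
\[
\brac{w^{-1}(\tfrac 1n \rho)}{\a_{i,n-1}}
\;=\; (q - a_{u(i)}) + \tfrac{r - u(i)}{n},
\]
where $a_k$ denotes the $k$th entry of $\vec n(\lambda)$, $r = u(n)$ is the rightmost position of the maximum $q$ of $\vec n(\lambda)$, and $\lambda_1 = nq - (n-r)$. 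The first summand is an integer, while the second lies in $(-1,1)\setminus\{0\}$ since $u(i) \neq r$ for $i<n$.

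By Proposition~\ref{prop-gamma} together with linearity of $\brac{\,}{\,}$,
\[
\sum_{i=1}^{n-1} \brac{w^{-1}(\tfrac 1n \rho)}{\a_{i,n-1}} \;=\; \brac{w^{-1}(\tfrac 1n \rho)}{\gamma} \;=\; \lambda_1 + \tfrac{n-1}{2},
\]
so showing (1) reduces to verifying that the fractional parts of these $n-1$ inner products sum to $\tfrac{n-1}{2}$. From the explicit formula, the $i$th fractional part equals $\tfrac{(r - u(i)) \bmod n}{n}$. As $i$ ranges over $\{1,\ldots,n-1\}$, $u(i)$ ranges over $\{1,\ldots,n\}\setminus\{r\}$, and $k \mapsto (r-k) \bmod n$ is a bijection from $\{1,\ldots,n\}\setminus\{r\}$ to $\{1,2,\ldots,n-1\}$. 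Hence the fractional parts sum to $\tfrac{1+2+\cdots+(n-1)}{n} = \tfrac{n-1}{2}$, so $\sum_i k_{w,\a_{i,n-1}} = \lambda_1$, proving (1).

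Part (2) follows by an entirely analogous argument: use Proposition~\ref{prop-Gamma} in place of Proposition~\ref{prop-gamma}, $\a_{1,j} = \e_1 - \e_{j+1}$ in place of $\a_{i,n-1} = \e_i - \e_n$, and $s = u(1)$ (the leftmost position of the minimum $-M$ of $\vec n(\lambda)$, with $\ell(\lambda) = nM + (1-s)$) in place of $r = u(n)$. The parallel computation yields $\brac{w^{-1}(\tfrac 1n \rho)}{\a_{1,j}} = (a_{u(j+1)} + M) + \tfrac{u(j+1) - s}{n}$, whose fractional parts again exhaust $\{\tfrac{1}{n},\tfrac{2}{n},\ldots,\tfrac{n-1}{n}\}$ as $j$ varies.

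The main obstacle is the careful bookkeeping needed to derive the explicit formula $(q - a_{u(i)}) + \tfrac{r-u(i)}{n}$ from the decomposition $w^{-1} = t_{u^{-1}(-\vec n(\lambda))} u^{-1}$ and to confirm the bijection $k \mapsto (r-k)\bmod n$; once those are in hand, the crucial numerical coincidence is that the ``shift by $\tfrac{n-1}{2}$'' appearing in Propositions~\ref{prop-gamma} and \ref{prop-Gamma} is exactly accounted for by the sum of fractional parts, which is what converts the continuous identity into the integer identity we want.
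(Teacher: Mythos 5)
Your proposal is correct and follows essentially the same route as the paper: both decompose $w^{-1}=t_{u^{-1}(-\vec n(\lambda))}u^{-1}$, split each $\brac{w^{-1}(\frac 1n \rho)}{\a}$ into an integer translation part and a small $\rho$-part, and use the positions $r=u(n)$, $s=u(1)$ of the extreme entries of $\vec n(\lambda)$ from Propositions~\ref{prop-gamma} and~\ref{prop-Gamma}. The only difference is bookkeeping: you sum the fractional parts (getting $\frac{n-1}{2}$ via the bijection $k\mapsto (r-k)\bmod n$) and subtract from the total $\brac{w^{-1}(\frac 1n\rho)}{\gamma}=\lambda_1+\frac{n-1}{2}$, whereas the paper sums the floors of the $\rho$-parts directly (getting $-(n-r)$) and adds $\brac{u^{-1}(-\beta)}{\gamma}=qn$; these are the same computation.
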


\begin{proof}
Consider
\begin{eqnarray*}
\brac{w^{-1}(\frac 1n \rho)}{\alij{i}{n}} &=&
\brac{u^{-1}(-\beta)}{\alij{i}{n}}  + \brac{u^{-1} (\frac 1n \rho)}{\alij{i}{n}}
\\
&=&\brac{u^{-1}(-\beta)}{\alij{i}{n}}  + \brac{\frac 1n \rho}{\e_{u(i)} - \e_{u(n)}}.
\\
&=&\brac{u^{-1}(-\beta)}{\alij{i}{n}}  + \brac{\frac 1n \rho}{\e_{u(i)} - \e_r}.
\end{eqnarray*}
Note $\brac{u^{-1}(-\beta)}{\alij{i}{n}}  \in \ZZ$ and
$\floor{\brac{\frac 1n \rho}{\e_{u(i)} - \e_r}} = 0 $ if $u(i) < r$, but
$\floor{\brac{\frac 1n \rho}{\e_{u(i)} - \e_r}} = -1 $ if $u(i) > r$.
Hence
$$\sum_i \floor{\brac{u^{-1} (\frac 1n \rho)}{\alij{i}{n}}} = -(n-r).$$
We then compute
\begin{eqnarray*}
\sum_i \kwaij{i}{n}
&=& r-n + \sum_i \brac{u^{-1}(-\beta)}{\alij{i}{n}}
\\
&=& r-n + \brac{u^{-1}(-\beta)}{\gamma} = r-n +qn
\\
&=& \lambda_1
\end{eqnarray*}
by the computations in the proof of Proposition \ref{prop-gamma}.

Likewise, $\floor{\brac{u^{-1} (\frac 1n \rho)}{\alij{1}{j}} }
 =\floor{\brac{\frac 1n \rho}{ \e_{s} - \e_{u(j)} }} = -1$
if $u(j) < s$ and zero otherwise.
As above,
\begin{eqnarray*}
\sum_j \kwaij{1}{j} &=& -(s-1) + \sum_j \brac{u^{-1}(-\beta)}{\alij{1}{j}}
\\
&=& 1-s + \brac{u^{-1}(-\beta)}{\Gamma} = 1-s +Mn
\\
&=& \ell(\lambda)
\end{eqnarray*}
by the computations in the proof of Proposition \ref{prop-Gamma}.
\end{proof}

We thus obtain another corollary to Theorem~\ref{thm:baseCase}.
\begin{corollary}
\label{cor:boundaryRegionsGF}
$$\wPthm= p^mq^m\polyInt{m}{p,q}^{n-2}.$$
\end{corollary}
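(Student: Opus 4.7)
The plan is to combine Theorem~\ref{thm:baseCase} with
Proposition~\ref{prop:cAndr} to parametrize the sum by level-number vectors, and then factor the resulting geometric sum.

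First, I would recall from the proof of Theorem~\ref{thm:baseCase} that
$h_{11}^{\lambda} = n(m-1)+1$ is equivalent to the vector of level numbers
for $\lambda$ having the form $\vec b(\lambda) = (0, m, b_2, b_3, \ldots, b_{n-1})$
with $0 \le b_i \le m-1$ for $2 \le i \le n-1$. Theorem~\ref{thm:baseCase} then gives a bijection between regions $\region{}\in\sSnmThe$ and the $(n-2)$-tuples $(b_2,\ldots,b_{n-1}) \in \{0,1,\ldots,m-1\}^{n-2}$ via $\region{} \leftrightarrow \RiMap^{-1}(\A)$, where $\A$ is the $m$-minimal alcove of $\region{}$.

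Next I would apply Proposition~\ref{prop:cAndr}: for such a region,
\begin{equation*}
\rStat{\region{}} \;=\; m + \sum_{i=2}^{n-1} b_i, \qquad
\cStat{\region{}} \;=\; m + \sum_{i=2}^{n-1} (m-1-b_i).
\end{equation*}
Consequently,
\begin{equation*}
\wPthm \;=\; \sum_{(b_2,\ldots,b_{n-1}) \in \{0,\ldots,m-1\}^{n-2}} p^{\,m+\sum_{i=2}^{n-1}(m-1-b_i)}\, q^{\,m+\sum_{i=2}^{n-1} b_i}.
\end{equation*}
The exponents split multiplicatively over the indices $i=2,\ldots,n-1$, so I would factor this as
\begin{equation*}
\wPthm \;=\; p^m q^m \prod_{i=2}^{n-1} \left( \sum_{b_i=0}^{m-1} p^{m-1-b_i}\, q^{b_i} \right) \;=\; p^m q^m\, \polyInt{m}{p,q}^{n-2},
\end{equation*}
which is the claimed identity.

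The only potential obstacle is verifying that every tuple $(b_2,\ldots,b_{n-1})$ in the stated range does arise from a region in $\sSnmThe$ (not just that regions give such tuples), but this is exactly the content of the converse direction in the proof of Theorem~\ref{thm:baseCase}, where it is shown that the conditions $b_1=m$ and $b_i \le m-1$ for $i \ge 2$ force all Shi coordinates $k_{ij} \le m$ (so that $e_{ij} = k_{ij}$) and force the separating wall conditions of Lemma~\ref{lem:indecomp} to hold. Thus the sum is over the full product set, and the factorization above is valid.
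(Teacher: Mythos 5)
Your proposal is correct and follows essentially the same route as the paper's own proof: both invoke Theorem~\ref{thm:baseCase} to identify the relevant regions with $n$-cores satisfying $h_{11}^{\lambda}=n(m-1)+1$, translate this via the abacus into level-number vectors $(0,m,b_2,\ldots,b_{n-1})$ with $0\le b_i\le m-1$, apply Proposition~\ref{prop:cAndr} to express the statistics, and factor the resulting sum over tuples. Your explicit attention to surjectivity (that every tuple arises from a region, via the converse direction of Theorem~\ref{thm:baseCase}) is a point the paper leaves implicit, and you also place the exponents of $p$ and $q$ consistently with the definition of the generating function, where the paper's displayed computation silently swaps them (harmless by the symmetry $b_i\mapsto m-1-b_i$).
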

\begin{proof}
Corollary~\ref{cor:boundaryRegionsGF} follows from
Theorem~\ref{thm:baseCase} or \ref{thm:altBaseCase},
Proposition~\ref{prop:cAndr}, and the abacus representation of
$n$-cores which have the prescribed hook length.
\begin{eqnarray*}
\wPthm&=&\sum_{\region{}\in\sepSet{n}{m}{\theta}}p^{\cStat{\region{}}}q^{\rStat{\region{}}}\\
&=&\sum_{\substack{\lambda\textrm{ is an }n-\textrm{core}\\h^{\lambda}_{11}=n(m-1)+1}}p^{m+\sum_{i=2}^{n-1}b_i}q^{m+\sum_{i=2}^{n-1}(m-1-b_i)}\\
&=&\sum_{\substack{(b_2,\ldots,b_{n-1})\\0\leq b_i\leq
m-1}}p^mq^m\left(\prod_{i=2}^{n-1}p^{b_i}q^{m-1-b_i}\right)\\
&=&p^mq^m(p^{m-1}+p^{m-2}q+\cdots+pq^{m-2}+q^{m-1})^{n-2}\\
&=&p^mq^m\polyInt{m}{p,q}^{n-2}.
\end{eqnarray*}

\end{proof}

In particular, by evaluating at $p=q=1$,
we have the following corollary to  Corollary~\ref{cor:boundaryRegionsGF}.
\begin{corollary}
There are $m^{n-2}$ regions in $\shinm$ which have $\Hthm=\Halpha{1}{n-1}{m}$
as a separating wall.
\label{cor:boundaryRegions}
\end{corollary}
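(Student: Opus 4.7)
The plan is to derive this corollary as an immediate consequence of Corollary~\ref{cor:boundaryRegionsGF} by specializing the generating function at $p=q=1$. Observe that by definition
\[
\wPthm\bigl|_{p=q=1} = \sum_{\region{}\in\sepSet{n}{m}{\theta}} 1 = |\sepSet{n}{m}{\theta}|,
\]
so the specialization counts precisely the regions of $\shinm$ for which $\Hthm$ is a separating wall. On the right-hand side of Corollary~\ref{cor:boundaryRegionsGF}, the factor $p^m q^m$ becomes $1$, while the bracket $\polyInt{m}{p,q} = p^{m-1}+p^{m-2}q+\cdots+q^{m-1}$ is a sum of $m$ monomials each specializing to $1$, so it evaluates to $m$. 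Multiplying gives $m^{n-2}$.

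Alternatively, one can give a direct counting argument that bypasses the generating function. By Theorem~\ref{thm:baseCase} (or Theorem~\ref{thm:altBaseCase}), the map $\RiMap$ (resp.\ $\FVMap$) restricts to a bijection between $\sepSet{n}{m}{\theta}$ and the set of $n$-cores $\lambda$ with $h_{11}^{\lambda}=n(m-1)+1$. As noted in the first paragraph of the proof of Theorem~\ref{thm:baseCase}, the condition $h_{11}^{\lambda}=n(m-1)+1$ is equivalent, in terms of the vector of level numbers $(b_0,b_1,\ldots,b_{n-1})$ of the balanced abacus of $\lambda$, to the requirements $b_0=0$, $b_1=m$, and $0\le b_i\le m-1$ for $2\le i\le n-1$. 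Since $(b_0,\ldots,b_{n-1})$ determines $\lambda$ uniquely (see the remark in Section~\ref{subsection;abacus-diagrams}) and the entries $b_2,\ldots,b_{n-1}$ are free within $\{0,1,\ldots,m-1\}$, the number of such $n$-cores is exactly $m^{n-2}$.

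There is no real obstacle here: the corollary is a one-line specialization once Corollary~\ref{cor:boundaryRegionsGF} is in hand. The only thing worth being careful about is that at $p=q=1$ each monomial in $\wPthm$ collapses to $1$, so that the left-hand side does count regions rather than some weighted sum. I would present the proof in the first form (setting $p=q=1$ in Corollary~\ref{cor:boundaryRegionsGF}) for maximum brevity, and mention the direct abacus count as a sanity check.
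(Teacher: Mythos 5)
Your proof is correct and matches the paper's own argument: the corollary is obtained exactly as you describe, by evaluating the generating function of Corollary~\ref{cor:boundaryRegionsGF} at $p=q=1$, where $\polyInt{m}{p,q}$ specializes to $m$. Your supplementary direct count of $n$-cores via the level vectors $(b_0,b_1,\ldots,b_{n-1})$ is a valid sanity check consistent with the first paragraph of the proof of Theorem~\ref{thm:baseCase}, but it is not needed.
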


There are direct explanations for Corollary~\ref{cor:boundaryRegions},
but we need Theorem~\ref{thm:baseCase},
Theorem~\ref{thm:altBaseCase}, and Corollary~\ref{cor:boundaryRegionsGF} to develop our recursions, where we
need to know more than the number of regions which have $\Hthm$ as a
separating wall. We use the number of hyperplanes which separate each
region from the origin.

\section{Arbitrary separating wall}
\label{sec:sephyp}

The next few lemmas provide an inductive method for determining
whether or not $\region{}\in\shi{}{n}{m}$ is an element of
$\sepSet{n}{m}{\a_{2,n-1}}$.

Given a Shi tableau $T_{\region{}} = \{e_{ij}\}_{1\leq i\leq j\leq
  n-1}$, where $\region{}\in\shi{}{n}{m}$, let $\tilde{T}_{\region{}}$
  be the tableau with entries $\{e_{ij}\}_{1\leq i\leq j\leq
  n-2}$. That is, $\tilde{T}_{\region{}}$ is $T_{\region{}}$ with the
  first column removed.
\omitt{
\begin{example}
Suppose $\region{}\in\shi{}{5}{m}$ and\\
\begin{tikzpicture}
\node at (-.6,0) {$T_{\region{}}=$};
\youngDiagram{{{$e_{14}$, $e_{13}$, $e_{12}$,
    $e_{11}$}, {$e_{24}$, $e_{23}$,
    $e_{22}$},{$e_{34}$, $e_{33}$}, {$e_{44}$}}}{.6}
\node at (3,0){.  Then };
\begin{scope}[shift={(4.7,0)}]
\node at (-.6,0) {$\tilde{T}_{\region{}}=$};
\youngDiagram{{{$e_{13}$, $e_{12}$,
    $e_{11}$}, { $e_{23}$,
    $e_{22}$},{$e_{33}$}}}{.6}
\end{scope}.
\end{tikzpicture}
\end{example}
} 
The next lemma tells us that $\tilde{T}_{\region{}}$ is always the Shi
tableau for a region in one less dimension.
\begin{lemma}
\label{lemma:tabRest}
If $T_{\region{}}$ is the tableau of a region $\region{}\in\shi{}{n}{m}$ and
$1\leq u\leq v\leq\n-1$, then $\tilde{T}_{\region{}}=T_{\tilde{\region{}}}$ for some
$\tilde{\region{}}\in\shi{}{n-1}{m}$.
\end{lemma}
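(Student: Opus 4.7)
The plan is to derive $\tilde T_{\region{}} = T_{\tilde\region{}}$ directly from the local characterization of Shi tableaux provided by Lemma~\ref{lemma:shiTabReg}. The key structural observation is that the defining relation~(\ref{eqn:shiTabReg}) for an entry $e_{ij}$ references only entries $e_{it}$ and $e_{t+1,j}$ with $i \le t < j$, that is, entries whose row/column indices are sandwiched between $i$ and $j$. Consequently, any sub-staircase of indices that is closed under the operation $(i,j) \mapsto \{(i,t),(t+1,j)\}$ inherits a well-formed Shi tableau from the ambient one, and this is the mechanism that will turn the $(u,v)$-parametrized restriction into a genuine Shi tableau for a smaller rank.

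First I would fix $(u,v)$ with $1 \le u \le v \le n-1$ and write out precisely which entries comprise $\tilde T_{\region{}}$, namely the sub-staircase of indices $(i,j)$ retained by the $(u,v)$-restriction (the shape stated in the paper, which in the displayed case $1\le i\le j\le n-2$ simply deletes the $j=n-1$ column). I would then fix the reindexing of these retained entries into a staircase of shape $(n-2,n-3,\ldots,1)$ so that the output can be compared against Lemma~\ref{lemma:shiTabReg} applied with $n$ replaced by $n-1$.

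Next I would verify the two hypotheses of Lemma~\ref{lemma:shiTabReg} for $\tilde T_{\region{}}$. The entrywise bound $0 \le e_{ij} \le m$ is immediate since each retained $e_{ij}$ already satisfies it in $T_{\region{}}$. For the recursive clause, given a retained pair $(i,j)$ and any $t$ with $i \le t < j$, the sandwich property noted above says that $(i,t)$ and $(t+1,j)$ are also retained by the $(u,v)$-restriction, so the relation satisfied by $e_{ij}$ in $T_{\region{}}$ transports verbatim to $\tilde T_{\region{}}$ after relabeling. Invoking Lemma~\ref{lemma:shiTabReg} in rank $n-1$ then delivers a region $\tilde\region{} \in \shi{}{n-1}{m}$ with $T_{\tilde\region{}} = \tilde T_{\region{}}$.

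The main obstacle is bookkeeping rather than conceptual: one must confirm that the specific sub-staircase selected by $(u,v)$ is closed under the splitting $(i,j)\mapsto\{(i,t),(t+1,j)\}$ for all $i\le t<j$ in its index set, and that the resulting relabeling yields an honest staircase of the right shape so that Lemma~\ref{lemma:shiTabReg} is literally applicable. Once that closure is confirmed, the Shi-tableau conditions transfer automatically and no further computation is required.
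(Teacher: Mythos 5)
Your proof is correct and takes essentially the same route as the paper, whose entire proof of this lemma is the one line ``This follows from Lemma~\ref{lemma:shiTabReg}'': your closure observation---that the characterization of $e_{ij}$ involves only $e_{it}$ and $e_{t+1,j}$ with $i\le t<j$, so the entries with $1\le i\le j\le n-2$ obtained by deleting the column $j=n-1$ satisfy the rank-$(n-1)$ characterization verbatim---is exactly the implicit content of that citation. You also correctly diagnosed that the indices $u,v$ in the statement play no role in the definition of $\tilde{T}_{\region{}}$, and the remaining relabeling is the trivial bookkeeping you describe.
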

\begin{proof}
This follows from Lemma~\ref{lemma:shiTabReg}.
\end{proof}

\begin{lemma}
\label{prop:induction}
  Let $T_{\region{}}$ be the Shi tableau for the region
  $\region{}\in\shi{}{n}{m}$ and let $\tR$ be defined by
$T_{\tR}=\tilde{T}_{\region{}}$, where
  $\tR\in\shi{}{n-1}{m}$ by Lemma~\ref{lemma:tabRest}. Then
  $\region{}\in\sepSet{n}{m}{\a_{i,n-2}}$ if and only if
  $\tR\in\sepSet{n-1}{m}{\a_{i,n-2}}$.
\end{lemma}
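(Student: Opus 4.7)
The plan is to invoke the indecomposability characterization of Lemma~\ref{lem:indecomp} on both sides of the biconditional, and then observe that every tableau entry involved is preserved under the passage from $T_{\region{}}$ to $\tilde T_{\region{}}$.

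First I would unpack what each side means. By Lemma~\ref{lem:indecomp} applied in dimension $n-1$ (i.e. to the $m$-Shi arrangement on $V$ of dimension $n-1$), the statement $\region{}\in\sepSet{n}{m}{\a_{i,n-2}}$ is equivalent to $e_{i,n-2}=m$ together with $e_{it}+e_{t+1,n-2}=m-1$ for every $t$ with $i\le t<n-2$. Symmetrically, applying Lemma~\ref{lem:indecomp} to $\tR\in\shi{}{n-1}{m}$ gives the identical numerical conditions, but phrased in terms of the entries of $T_{\tR}$.

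The key observation is that every entry $e_{uv}$ appearing in these conditions has $v\le n-2$, so none of them lies in the first column of $T_{\region{}}$, which by the indexing convention in Section~\ref{sec;shiTabs} consists exactly of the entries $e_{u,n-1}$ for $1\le u\le n-1$. Therefore the conditions reference only entries kept in $\tilde T_{\region{}}$, and by the hypothesis $T_{\tR}=\tilde T_{\region{}}$ these agree entry-by-entry with the corresponding entries of $T_{\tR}$. The two indecomposability conditions are then syntactically identical, giving the biconditional.

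The only point requiring care is the indexing convention (row $i$, column $n-j$) that governs what ``first column'' means; once that is pinned down the argument is essentially bookkeeping, and I do not anticipate a genuine obstacle. In spirit, this lemma simply formalizes the slogan that the separating-wall status of $\a_{i,n-2}$ is determined entirely in the lower-dimensional tableau obtained by deleting the first column.
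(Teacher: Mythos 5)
Your proposal is correct and follows exactly the paper's route: the paper's entire proof is ``This follows from Lemma~\ref{lem:indecomp},'' and your argument simply spells out that the indecomposability conditions for $\a_{i,n-2}$ involve only entries $e_{uv}$ with $v\le n-2$, which are untouched by deleting the first column (the entries $e_{u,n-1}$). Nothing further is needed.
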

\begin{proof}
This follows from Lemma~\ref{lem:indecomp}.
\end{proof}

In terms of generating functions, Lemma~\ref{prop:induction}
states:

\begin{eqnarray}
\label{eqn:gfRec1}
\wP{n}{m}{\a_{i,n-2}}{p}{q}&=&\sum_{\region{}\in\sepSet{n}{m}{\a_{i,n-2}}}p^{\cStat{\region{}}}q^{\rStat{\region{}}}\\
&=&\sum_{\R1\in\sepSet{n-1}{m}{\a_{i,n-2}}}\sum_{\substack{\region{}\in\shi{}{n}{m}\\\tR=\R1}}p^{\cStat{\region{}}}q^{\rStat{\region{}}}\nonumber
\end{eqnarray}

If $\R1\in\sepSet{n-1}{n}{\a_{i,n-}}$ and $\region{}\in\shinm$ are
such that $\tR=\R1$, then, since $e_{i,n-2}=m$ in the Shi tableau for
$\R1$, $\rStat{\region{}}=\rStat{\R1}+m$ and
$\cStat{\region{}}=\cStat{\R1}+k$, for some $k$. We need to establish
the possible values for $k$.

We will use Proposition 3.5 from \cite{Richards} to do this.  His
``pyramids'' correspond to our Shi tableaux for regions, with his $e$ and $w$
being our $n$ and $m+1$. He does not mention hyperplanes, but with the
conversion $_ua_v=m-e_{u+1,v}$ his conditions in Proposition 3.4 become
our conditions in Lemma~\ref{lemma:shiTabReg}.

In our language, his Proposition 3.5 becomes

\begin{lemma}[\cite{Richards}]
\label{prop:stairShi}
Let $\mu_1, \mu_2,\ldots,\mu_{n}$ be non-negative integers with
$$\mu_1\geq \mu_2\geq \ldots \geq \mu_{n}=0\text{ and }\mu_{i}\leq
(n-i)m.$$ Then there is a unique region $\region{}\in\shi{}{n}{m}$
with Shi tableau $T_{\region{}}=\{e_{ij}\}_{1\leq i\leq j\leq \n-1}$ such
that
$$\mu_j=\mu_j(\region{})=\sum_{i=1}^{n-j}e_{i,n-j}\text{ for }1\leq j\leq n-1$$
\end{lemma}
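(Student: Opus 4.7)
The plan is to argue by induction on $n$. For $n=1$ the tableau is empty and $\mu_1=0$ is forced. For the inductive step, given $\mu_1\ge\cdots\ge\mu_n=0$ with $\mu_j\le(n-j)m$, set $\tilde\mu_j=\mu_{j+1}$ for $j=1,\ldots,n-1$. The truncated sequence $\tilde\mu_1\ge\cdots\ge\tilde\mu_{n-1}=0$ satisfies $\tilde\mu_j\le((n-1)-j)m$, which are exactly the hypotheses for $n-1$, so by induction there is a unique $\tilde{\region{}}\in\shi{}{n-1}{m}$ whose Shi tableau $\tilde T$ has column sums $\tilde\mu_1,\ldots,\tilde\mu_{n-1}$. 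By Lemma~\ref{lemma:tabRest}, any region $\region{}\in\shinm$ with the prescribed column sums restricts to some Shi tableau in $\shi{}{n-1}{m}$ with column sums $\tilde\mu_1,\ldots,\tilde\mu_{n-1}$, which by uniqueness must be $\tilde T$. Hence the entries $e_{ij}$ with $1\le i\le j\le n-2$ are determined, and what remains is to show that, given $\tilde T$, there is a unique first column $(e_{1,n-1},\ldots,e_{n-1,n-1})$ of sum $\mu_1$ extending $\tilde T$ to a valid Shi tableau via Lemma~\ref{lemma:shiTabReg}.

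The key step is to show that, with $\tilde T$ fixed, the valid first columns form a saturated chain under componentwise order whose consecutive elements differ by $+1$ in exactly one entry. The minimum of this chain is the explicit column given by $e_{i,n-1}=\tilde e_{i,n-2}$ for $i\le n-2$ and $e_{n-1,n-1}=0$; it has sum $\mu_2$, and satisfies the Shi conditions directly from those of $\tilde T$, since $\tilde e_{it}+\tilde e_{t+1,n-2}\le \tilde e_{i,n-2}\le \tilde e_{it}+\tilde e_{t+1,n-2}+1$ (or the ``$=m$'' case) holds in $\tilde T$. The maximum is the all-$m$ column, trivially valid with sum $(n-1)m$. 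To traverse the chain, one passes from one valid column to the next by locating the unique entry that can be incremented by one without forcing cascading increments in other rows; because each local Shi constraint restricts $e_{i,n-1}$ to a set of size at most two, exactly one such move is available at each step. The resulting chain has $(n-1)m-\mu_2+1$ elements with distinct sums $\mu_2,\mu_2+1,\ldots,(n-1)m$, so the prescribed value $\mu_1$ in this interval pins down a unique first column.

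The main obstacle is the rigorous verification of the chain structure, in particular that the chosen increment is always well-defined and that no valid column is skipped (so that each integer sum in $[\mu_2,(n-1)m]$ is realized exactly once). The restrictive two-valued form of the Shi constraints of Lemma~\ref{lemma:shiTabReg} is essential here. A shortcut is to invoke \cite{Richards} directly: under the substitution ${}_u a_v = m - e_{u+1,v}$ already recorded above, his Proposition~3.4 coincides with Lemma~\ref{lemma:shiTabReg}, and his Proposition~3.5, with his parameters $(e,w)$ matching our $(n,m+1)$, is the present lemma.
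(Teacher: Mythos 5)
There is a genuine gap: the entire content of the lemma has been pushed into the ``key step'' (that the valid first columns extending a fixed $\tilde T$ form a saturated chain whose sums run exactly over $\mu_2,\mu_2+1,\ldots,(n-1)m$, each achieved once), and that step is asserted rather than proved. Your justification --- ``each local Shi constraint restricts $e_{i,n-1}$ to a set of size at most two, so exactly one move is available at each step'' --- does not work, because the constraints on the entries of the new column are coupled to one another: the condition on $e_{i,n-1}$ from Lemma~\ref{lemma:shiTabReg} involves $e_{t+1,n-1}$ for $t=i,\ldots,n-2$, i.e.\ the \emph{other, as yet undetermined} entries of the same column, not only the known entries of $\tilde T$. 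To make your chain argument rigorous you would need to prove two separate facts: (a) no two valid columns have the same sum (uniqueness), and (b) every integer in $[\mu_2,(n-1)m]$ is realized (existence). Neither is supplied, and (a) is essentially equivalent to the statement you are trying to prove, so the argument as written is close to circular. Your identification of the minimal column ($e_{i,n-1}=e_{i,n-2}$, $e_{n-1,n-1}=0$) and maximal column (all $m$) is correct and verifiable, but that only pins down the endpoints. Falling back on citing \cite{Richards} is not a proof; the paper explicitly reproduces Richards' argument ``for completeness,'' which is the task here.

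For comparison, the paper's proof supplies exactly the two missing ingredients by different means. Uniqueness is proved directly by induction: if two regions share all column sums, the first $n-2$ columns agree by induction, and if their last columns first differ at row $u$ with $e_{u,n-1}<f_{u,n-1}$, equal sums force some $v>u$ with $e_{v,n-1}>f_{v,n-1}$, whence $e_{u,n-1}<f_{u,n-1}\le f_{u,v-1}+f_{v,n-1}+1\le e_{u,v-1}+e_{v,n-1}$, contradicting Lemma~\ref{lemma:shiTabReg}. Existence is then obtained globally by a counting argument: the map from regions to column-sum sequences is injective, and both the number of dominant regions and the number of admissible sequences equal $\frac{1}{mn+1}\binom{(m+1)n}{n}$, so the map is a bijection. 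If you want to keep your local, chain-based strategy, you would need to adapt the least-index argument above to show the valid columns are totally ordered with distinct sums, and separately show (e.g.\ by exhibiting an explicit increment, or by a counting argument at the level of extensions of $\tilde T$) that no sum in the interval is skipped.
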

We include his proof for completeness.
\begin{proof}
  By Lemma~\ref{lemma:shiTabReg}, we have $e_{ij}\geq e_{i+1,j}$ and
  $e_{ij}\geq e_{i,j-1}$ for $1\leq i<j\leq n-1$, which, combined with
  $0\leq e_{ij}\leq m$, means that the column sums
  $\mu_j=\sum_{i=1}^{j}e_{ij}$ form a partition such that $0\leq
  \mu_j\leq m(n-j)$.

\medskip

We use induction on $n$ to show that given such a partition $\mu$,
there is at most one region whose Shi tableau has column sums $\mu$. It is
clearly true for $n=2$. Let $n>2$ and suppose we had two regions
$\region{1}$ with coordinates $\{e_{ij}\}_{1\leq i\leq j\leq n-1}$
and $\region{2}$ with coordinates $\{f_{ij}\}_{1\leq i\leq j\leq n-1}$
such that
$$\mu_j=\sum_{i=1}^je_{ij}=\sum_{i=1}^jf_{ij}.$$ By induction
$e_{ij}=f_{ij}$ for $1\leq i\leq j\leq n-2.$ Let $u$ be the least
index such that $e_{u,n-1}\neq f_{u,n-1}$ and assume
$e_{u,n-1}<f_{u,n-1}$. Then since
$\sum_{i=1}^{n-1}e_{i,n-1}=\sum_{i=1}^{n-1}f_{i,n-1}$, we have that
$e_{v,n-1}>f_{v,n-1}$ for some $v$ such that $u<v\leq n-1$. Then since
$f_{u,n-1}\leq f_{u,v-1}+f_{v,n-1}+1$ by Lemma~\ref{lemma:shiTabReg} and
$f_{u,v-1}=e_{u,v-1}$ by induction, we have
$$e_{u,n-1}< f_{u,n-1}\leq e_{u,v-1}+f_{v,n-1}+1\leq
e_{u,v-1}+e_{v,n-1}.$$
This contradicts Lemma~\ref{lemma:shiTabReg} applied to $\region{1}$.

\medskip
However, there are $\frac{1}{mn+1}{{(m+1)n}\choose n}$ dominant Shi
regions by \cite{Shi1997} for $m=1$ and \cite{A2004} for $m > 1$ and
it is well-known that there are also $\frac{1}{mn+1}{{(m+1)n}\choose
n}$ partitions $\mu$ such that $0\leq \mu_i\leq m(n-i)$, so we are
done.
\end{proof}

\begin{example}
Consider $\region{1}$, $\region{2}$, and $\region{3}$ in $\shi{}{3}{2}$
with tableaux
\begin{center}
\begin{tikzpicture}[font=\footnotesize]
\begin{scope}[shift={(-5,0)}]
\youngDiagram{{{2,2,1},{2,2},{2}}}{.4}
\end{scope}
\begin{scope}[shift={(-3,0)}]
\youngDiagram{{{2,2,1},{2,2},{1}}}{.4}
\end{scope}
\begin{scope}[shift={(-1,0)}]
\youngDiagram{{{2,2,1},{2,2},{0}}}{.4}
\end{scope}
\end{tikzpicture}
\end{center}
respectively. Then
$\tilde{\region{1}}=\tilde{\region{2}}=\tilde{\region{2}}=\region{}$,
where $\region{}$ is the region in $\shi{}{2}{2}$ with tableau
\begin{tikzpicture}[font=\footnotesize]
\youngDiagram{{{2,1},{2}}}{.5}
\end{tikzpicture}
\end{example}

Let $\a=\aij$, where $1\leq i\leq j\leq n-2$ in the
following.  Suppose $\R1$ is a region,
where $\R1\in\shi{}{n-1}{m}$ and $T_{\R1}=\{e_{ij}\}_{1\leq i\leq j\leq
n-2}$, and $k$ is an integer such that $\sum_{i=1}^{n-2}e_{i,n-2}\leq
k\leq (n-1)m$. Then 
Lemma~\ref{prop:stairShi} means
there is a region $\region{}\in\shi{}{n}{m}$ such that
$\tR=\R1$ and the first column sum of $\region{}$'s Shi tableau is $k$.
Additionally, by Lemma~\ref{prop:induction}, we have
$\R1\in\sepSet{n-1}{m}{\a}$ if and only if
$\region{}\in\sepSet{n}{m}{\a}$. On the other hand, given
$\region{}\in\shi{}{n}{m}$ with Shi tableau
$T_{\region{}}=\{e_{ij}\}_{1\leq i\leq j\leq n-1}$, let $k$ be the
first column sum of $T_{\region{}}$. Then by Lemma~\ref{lemma:tabRest}
and the fact that $e_{i,n-1}\geq e_{i,n-2}$ for $1\leq i\leq n-2$, the
pair $(\tR,k)$ is such that $\tR\in\shi{}{n-1}{m}$ and the first
column sum of $T_{\tR}$ is not more than $k$. Again, by
Lemma~\ref{prop:induction}, we have $\tR\in\sepSet{n-1}{m}{\a}$ if and
only if $\region{}\in\sepSet{n}{m}{\a}$.

We continue (\ref{eqn:gfRec1}), keeping in mind that
$\cStat{\R1}$ is the first column sum for $T_{\R1}$. For ease of reading, write $\a$ for $\a_{i,n-2}$ in the following calculation.
\begin{eqnarray*}
\wP{n}{m}{\a}{p}{q}&=&\sum_{\region{}\in\sepSet{n}{m}{\a}}p^{\cStat{\region{}}}q^{\rStat{\region{}}}\\\nonumber
&=&\sum_{\R1\in\sepSet{n-1}{m}{\a}}\sum_{\substack{\region{}\in\shi{}{n}{m}\\\tR=\R1}}p^{\cStat{\region{}}}q^{\rStat{\region{}}}\\\nonumber
&=&\sum_{\R1\in\sepSet{n-1}{m}{\a}}\sum_{\substack{k\\\cStat{\R1}\leq
k\leq n(m-1)}}p^{k}q^{\rStat{\R1}+m}\\\nonumber
&=&\sum_{\R1\in\sepSet{n-1}{m}{\a}}\sum_{\substack{k'\\0\leq k'\leq
n(m-1)-\cStat{\R1}}}p^{\cStat{\R1}+k'}q^{\rStat{\R1}+m}\\\nonumber
&=&\trunc{\sum_{\R1\in\sepSet{n-1}{m}{\a}}\sum_{\substack{k'\\0\leq
k'\leq
n(m-1)}}p^{\cStat{\R1}+k'}q^{\rStat{\R1}+m}}{p}{(n-1)m}\\\nonumber
&=&\trunc{\sum_{\R1\in\sepSet{n-1}{m}{\a}}\sum_{\substack{k'\\0\leq
k'\leq
n(m-2)}}p^{\cStat{\R1}+k'}q^{\rStat{\R1}+m}}{p}{(n-1)m}\\\nonumber
&\quad&\text{since }\cStat{\R1}\geq m\\\nonumber
&=&\trunc{q^m\left(\sum_{\R1\in\sepSet{n-1}{m}{\a}}
p^{\cStat{\R1}}q^{\rStat{\R1}} \right)\left( \sum_{\substack{k'\\0\leq
k'\leq n(m-2)}}p^{k'}\right)}{p}{(n-1)m}\\\nonumber
&=&\trunc{q^m\polyInt{(n-2)m+1}{p}\wP{n-1}{m}{\a}{p}{q}}{p}{(n-1)m}.
\end{eqnarray*}

The result of the above calculation is that
\begin{equation}
\label{eqn:gfRec2}
\wP{n}{m}{\a}{p}{q}=\trunc{q^m\polyInt{(n-2)m+1}{p}\wP{n-1}{m}{\a}{p}{q}}{p}{(n-1)m}
\end{equation}
when $\a=\a_{i,n-2}$.

The next proposition will provide a method for determining whether or
not $\HH{1}{n-j}{m}$ is a separating wall for $\region{}$. Given a Shi
tableau $T=\{e_{ij}\}_{1\leq i\leq j\leq\n-1}$ for a region in
$\shi{}{n}{m}$, let $T'$ be its {\it conjugate} given by
$T'=\{e'_{ij}\}_{1\leq i\leq j\leq\n-1}$, where $e'_{ij}=e_{n-j,n-i}.$
\omitt{
\begin{example}
\end{example}
}
By Lemma~\ref{lemma:shiTabReg}, $T'$ will also be Shi tableau of a
region in $\shi{}{n}{m}.$ Additionally, by Lemma~\ref{lem:indecomp},
we have the following proposition.

\begin{proposition}
\label{prop:symmetry}
Suppose the regions $\region{}$ and $\region{}'$ are related by
$$(T_{\region{}})'=T_{\region{}'}.$$ Then
$\region{}\in\sepSet{n}{m}{\a_{ij}}$ if and only if
$\region{}'\in\sepSet{n}{m}{\a_{n-j,n-i}}$.
\end{proposition}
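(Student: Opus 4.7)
The plan is to verify both directions simultaneously via a direct translation of the separating-wall criterion of Lemma~\ref{lem:indecomp} under the conjugation $e'_{ab}=e_{n-b,n-a}$. Since Lemma~\ref{lemma:shiTabReg} already guarantees that $T'$ is a valid Shi tableau of some region in $\shi{}{n}{m}$, it only remains to check the two characterizations of separating walls agree after conjugation.

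First, I would unpack what it means for $\region{}'\in\sepSet{n}{m}{\a_{n-j,n-i}}$: by Lemma~\ref{lem:indecomp} this is equivalent to $e'_{n-j,n-i}=m$ together with $e'_{n-j,t}+e'_{t+1,n-i}=m-1$ for every $t$ with $n-j\le t<n-i$. Substituting $e'_{ab}=e_{n-b,n-a}$, the first equality becomes $e_{i,j}=m$, which matches the first requirement for $\region{}\in\sepSet{n}{m}{\aij}$.

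Next, I would treat the sum condition. The substitution gives $e'_{n-j,t}=e_{n-t,j}$ and $e'_{t+1,n-i}=e_{i,n-t-1}$, so the condition reads
\[
e_{i,n-t-1}+e_{n-t,j}=m-1\qquad\text{for all } n-j\le t<n-i.
\]
Setting $s=n-t-1$, as $t$ runs through $n-j,\dots,n-i-1$, $s$ runs through $i,\dots,j-1$, and the condition becomes
\[
e_{i,s}+e_{s+1,j}=m-1\qquad\text{for all } i\le s<j,
\]
which is precisely the second requirement in Lemma~\ref{lem:indecomp} for $\region{}\in\sepSet{n}{m}{\aij}$. Since the two sets of conditions are logically identical, the equivalence of memberships follows.

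The argument is essentially bookkeeping, so there is no genuine obstacle; the only care needed is in matching the index ranges under the substitution $s=n-t-1$ and verifying that $n-t$ and $n-t-1$ lie in the allowed ranges $[i,j]$ and $[i,j-1]$ respectively, so that the entries $e_{n-t,j}$ and $e_{i,n-t-1}$ are legitimate entries of $T_{\region{}}$.
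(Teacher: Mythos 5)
Your proposal is correct and is exactly the argument the paper intends: the paper simply asserts the proposition follows from Lemma~\ref{lem:indecomp} (together with Lemma~\ref{lemma:shiTabReg} guaranteeing $T'$ is a valid Shi tableau), and your substitution $e'_{ab}=e_{n-b,n-a}$ with the reindexing $s=n-t-1$ carries out precisely that bookkeeping. No differences in approach, and the index ranges check out.
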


In terms of generating functions, this becomes the following:
\begin{equation}
\label{eqn:symmetry}
\wP{n}{m}{\aij}{p}{q}=\wP{n}{m}{\a_{n-j,n-i}}{q}{p}.
\end{equation}

We will now combine Theorem~\ref{thm:baseCase},
Proposition~\ref{prop:induction}, and Proposition~\ref{prop:symmetry}
to produce an expression for the generating function for regions with
a given separating wall.

Given a polynomial $f(p,q)$ in two variables, let $\colRem{k}{m}(f(p,q))$
be the polynomial
$$\trunc{q^m\polyInt{m(k-2)+1}{p}f(p,q)}{p}{(k-1)m}.$$
We define the polynomial $\rho(f)$ by $$\rho(f)(p,q)=f(q,p).$$
 \omitt{and let
$\rho(f)$ be the original polynomial with $p$ and $q$ reversed: $f(q,p)$.}
Then (\ref{eqn:gfRec2}) is

$$\wP{n}{m}{\aij}{p}{q}=\colRem{n}{m}(\wP{n-1}{m}{\aij}{p}{q})$$ for $j=n-2$ and
(\ref{eqn:symmetry})
is $$\wP{n}{m}{\aij}{p}{q}=\rho(\wP{n}{m}{\a_{n-j,n-i}}{p}{q}).$$

Finally, the full recursion is
\begin{theorem}
$$\wP{n}{m}{\a_{uv}}{p}{q}=\colRem{n}{m}(\colRem{n-1}{m}(\ldots\colRem{v+2}{m}(\rho(\colRem{v+1}{m}(\ldots(\colRem{v-u+3}{m}(p^mq^m\polyInt{m}{p,q}^{v-u})\ldots).$$
\end{theorem}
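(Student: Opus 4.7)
The plan is to interleave the two rewrite rules already established---the recursion $\wP{n}{m}{\a_{uv}}{p}{q}=\colRem{n}{m}(\wP{n-1}{m}{\a_{uv}}{p}{q})$ from (\ref{eqn:gfRec2}) and the symmetry $\wP{n}{m}{\aij}{p}{q}=\rho(\wP{n}{m}{\a_{n-j,n-i}}{p}{q})$ from (\ref{eqn:symmetry})---with the base case $\wPthm=p^mq^m\polyInt{m}{p,q}^{n-2}$ from Corollary~\ref{cor:boundaryRegionsGF}. The goal is to reduce the pair $(n,\a_{uv})$ to the pair $(v-u+2,\theta)$ in two dimension-reduction stages separated by a single application of symmetry; the $\colRem{k}{m}$'s and the $\rho$ appear in the stated formula in exactly the order in which they are introduced in this reduction.

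First I would apply (\ref{eqn:gfRec2}) repeatedly, peeling off $\colRem{n}{m},\colRem{n-1}{m},\ldots,\colRem{v+2}{m}$ in succession and leaving $\wP{v+1}{m}{\a_{uv}}{p}{q}$ inside. At each step I must check that (\ref{eqn:gfRec2}) applies to $\a_{uv}$ in the current dimension $k$, which requires $v\le k-2$; this holds exactly for $k=v+2,\ldots,n$. It is worth noting that, although the paper phrases (\ref{eqn:gfRec2}) for $\a=\a_{i,n-2}$, its derivation goes through for any $\a_{uv}$ with $v\le n-2$: the only special feature used is the inequality $\cStat{\R1}\ge m$, and this follows from $e_{uv}=m$ together with the monotonicity $e_{u,n-2}\ge e_{uv}$ supplied by Lemma~\ref{lemma:shiTabReg}.

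In dimension $v+1$ the root $\a_{uv}$ has become the highest root of $\shi{}{v+1}{m}$, so (\ref{eqn:gfRec2}) can no longer be applied. I would then invoke (\ref{eqn:symmetry}), which replaces $\a_{uv}$ by its conjugate $\a_{(v+1)-v,(v+1)-u}=\a_{1,v-u+1}$ at the cost of an outer $\rho$, accounting for the $\rho$ in the middle of the claimed expression. The new root $\a_{1,v-u+1}$ has second index $v-u+1$, which is at most (current dimension)$-2$ so long as the current dimension is at least $v-u+3$. I would therefore restart the dimension-reduction stage: apply (\ref{eqn:gfRec2}) to peel off $\colRem{v+1}{m},\colRem{v}{m},\ldots,\colRem{v-u+3}{m}$ in turn, stopping when the dimension reaches $v-u+2$. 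At that point $\a_{1,v-u+1}$ is the highest root $\theta$ in dimension $v-u+2$, and Corollary~\ref{cor:boundaryRegionsGF} yields the base factor $p^mq^m\polyInt{m}{p,q}^{v-u}$, which is precisely the innermost expression in the theorem.

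The main obstacle is not any individual step but the index bookkeeping: at each application of (\ref{eqn:gfRec2}) I must verify that the second index of the root currently in play does not exceed (current dimension)$-2$, and the stages are arranged precisely so that this holds except at the two boundary moments handled by the symmetry and by the base case. The extreme cases are easy to accommodate: when $u=1$ the inner reduction contributes no $\colRem{}{m}$ factors and $\rho$ simply acts on a polynomial symmetric in $p,q$, while when $u=v$ the base case degenerates to $p^mq^m\polyInt{m}{p,q}^0=p^mq^m$, both consistent with the stated recursion.
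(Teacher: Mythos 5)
Your argument is correct and follows exactly the route the paper itself sketches after the theorem statement: iterate the column-removal recursion \eqref{eqn:gfRec2} to peel off $\colRem{n}{m},\ldots,\colRem{v+2}{m}$, conjugate via \eqref{eqn:symmetry} to turn $\a_{uv}$ into $\a_{1,v-u+1}$, iterate again down to dimension index $v-u+2$, and close with the base case of Corollary~\ref{cor:boundaryRegionsGF}. Your remark that \eqref{eqn:gfRec2}, though stated for $\a_{i,n-2}$, extends to any $\a_{uv}$ with $v\le n-2$ because tableau monotonicity still forces $\cStat{\R1}\ge m$ (and likewise $e_{1,n-1}=m$, so $\rStat{\region{}}=\rStat{\R1}+m$) is precisely the unstated point the paper's one-line justification relies on.
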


The idea behind the theorem is that, given a root $\a_{uv}$ in dimension
$n-1$, we remove columns using Lemma~\ref{prop:stairShi} until we are in
dimension $(v+1)-1$, then we conjugate, then remove columns again until
our root is $\a_{1,v-u+1}$ and we are in dimension $(v-u+2)-1$.

\begin{example}
We would like to know how many elements there are in
$\sepSet{7}{2}{\a_{24}}$; that is, how many dominant regions in the
$2$-Shi arrangement for $n=7$ have $\Hak{\a_{24}}{2}$ as a separating
wall. In order to make this readable, we omit the $m$ subscript, since
it is always $2$ in this calculation.

\begin{eqnarray*}
\wP{7}{2}{\a_{24}}{p}{q}&=&\trunc{q^2\polyInt{11}{p}\wP{6}{}{\a_{24}}{p}{q}}{p}{12}\\
&=&\trunc{q^2\polyInt{11}{p}\trunc{q^2\polyInt{9}{p}\wP{5}{}{\a_{24}}{p}{q}}{p}{10}}{p}{12}\\
&=&\trunc{q^2\polyInt{11}{p}\trunc{q^2\polyInt{9}{p}\wP{5}{}{\a_{13}}{q}{p}}{p}{10}}{p}{12}\\
&=&\trunc{q^2\polyInt{11}{p}\trunc{p^2\polyInt{9}{p}\trunc{q^2\polyInt{7}{q}\wP{4}{}{\a_{13}}{q}{p}}{q}{8}}{q}{10}}{q}{12}\\
&=&\trunc{q^2\polyInt{11}{p}\trunc{p^2\polyInt{9}{p}\trunc{q^2\polyInt{7}{q}\left(p^2q^2\polyInt{2}{p,q}^2\right)}{q}{8}}{p}{10}}{p}{12}\\
\end{eqnarray*}
After expanding this polynomial and evaluating at $p=q=1$, we see
there are 781 regions in the dimension 7 2-Shi arrangement which have
$\Hak{\a_{24}}{2}$ as a separating wall.
\end{example}

\section*{Future work}
It would be interesting to expand this problem by considering a given
set of more than one separating walls.  That is, given a set
${\mathcal H}_{\Delta'} =\{ H_{\alpha,m} , \alpha \in \Delta'
\subseteq \Delta\}$ of hyperplanes in the Shi arrangement, find the
number of regions having all the hyperplanes in $\mathcal H_{\Delta'}$
as separating walls.

We would again be able to define a similar generating function, use
the functions $\phi_{k,m}$ and $\rho$ corresponding to truncation and
conjugation of the Shi tableaux, but we should be able to
compute the generating function for a suitably chosen base case.

\section*{Acknowledgement}
We thank Matthew Fayers for telling us of \cite{Richards} and
explaining its relationship to \cite{FV1}. We thank Alessandro
Conflitti for simplifying the proof of
Proposition~\ref{prop:FVBij}. We thank the referee for comments which
helped us improve the exposition.  \bibliographystyle{abbrvnat}
\bibliography{FTVBib}
\label{sec:biblio}

\end{document}